\newcommand{\be}{\begin{equation}}
\newcommand{\ee}{\end{equation}}
\newcommand{\cn}{{\rm \,cn}}
\newcommand{\sn}{{\rm \,sn}}
\newcommand{\dn}{{\rm \,dn}}
\newcommand{\Ker}{{\rm \,Ker}}
\newcommand{\K}{{\rm \,K}}
\newcommand{\E}{{\rm \,E}}
\numberwithin{equation}{section}
\numberwithin{figure}{section}
\newtheorem{theorem}{Theorem}[section]
\newtheorem{proposition}[theorem]{Proposition}
\newtheorem{remark}[theorem]{Remark}
\newtheorem{lemma}[theorem]{Lemma}
\newtheorem{definition}[theorem]{Definition}
\begin{document}
\vglue-1cm \hskip1cm
\title[Cnoidal Waves for the quintic Klein-Gordon and Schrödinger Equations]{Cnoidal waves for the quintic Klein-Gordon and Schrödinger equations: Existence and Orbital Instability}

\begin{center}

\subjclass[2000]{81Q05, 35B10, 35B35, 35Q55, 35Q70}

\keywords{Quintic Klein-Gordon equation, quintic Schrödinger equation, cnoidal waves, orbital instability.}

\maketitle

{\bf Gabriel E. Bittencourt Moraes}

{Departamento de Matem\'atica - Universidade Estadual de Maring\'a\\
	Avenida Colombo, 5790, CEP 87020-900, Maring\'a, PR, Brazil.}\\
{ pg54546@uem.br}

{\bf Guilherme de Loreno}

{Departamento de Matem\'atica - Universidade Estadual de Maring\'a\\
	Avenida Colombo, 5790, CEP 87020-900, Maring\'a, PR, Brazil.}\\
{ pg54136@uem.br}

\vspace{3mm}

\end{center}

\begin{abstract}
\noindent In the present paper, we establish the existence and orbital instability results of \textit{cnoidal} periodic
waves for the quintic Klein-Gordon  and nonlinear Schrödinger equations. The spectral analysis for the corresponding
linearized operator is established by using the Floquet theory. The orbital instability is determined by applying an abstract result due to Shatah and Strauss.
\end{abstract}

\section{Introduction} 

This paper concerns new results about the orbital instability of periodic cnoidal standing waves for the quintic nonlinear Klein-Gordon equation (QKG)
\begin{equation}\label{KF2}
u_{tt}-u_{xx}+u-|u|^{4}u=0,
\end{equation} 
and the quintic nonlinear Schrödinger equation (QNLS)
\begin{equation}\label{NLS-equation}
	iu_t + u_{xx} + |u|^4u = 0.
\end{equation}
In both equations $u = u(x,t)$, $(x,t) \in \mathbb{R}{\times}\mathbb{R}_+$ is a complex-valued function and $L-$periodic at the first variable.

The nonlinear Klein-Gordon equation has several physics applications, for instance, particle physics, physical problems such as ferroelectric transitions,  crystal growths, dislocations, plasma physics, fluid mechanics and other related.  The nonlinear Schrödinger equation arises in various physical and biological contexts, for example, in nonlinear optics, for Bose-Einstein condensates, in the description of nonlinear waves such as propagation of a lase beam, water waves at the free surface of an ideal fluid and plasma waves. In the modelling of the DNA and it appears also in mesoscopic molecular structures.

The orbital stability for the KG has been extensively studied in the last decades. In \cite{shatah}, Shatah gave sufficient conditions for the orbital stability of the $n$-dimensional KG equation
\begin{equation}\label{KGShatah0}
	u_{tt}-\Delta u+u+f(|u|){\rm arg}u=0,
\end{equation} In \cite{grillakis}, Grillakis studied sufficient conditions for the orbital instability of standing waves of the form $u(x,t)=e^{ict}\varphi(x)$ related to the following equation
\begin{equation}\label{KGGrillakis}
	u_{tt}-\Delta u+u-f(|u|^2)u=0.
\end{equation}
This result has been generalized by Jeanjean and Le Coz in \cite{jeanjean} and they used the Mountain Pass Theorem to show the existence of minimizers for a certain  constrained functional.

Consider the Klein-Gordon equation with $p$-power nonlinearity posed in $\mathbb{R}^n$
\begin{equation}\label{KGShatah}
	u_{tt}-\Delta u +u-|u|^{p-1}u=0.
\end{equation}
For the case $p\in \mathbb{N}$ and $n\geq 3$, Shatah in \cite{shatah}  proved a result of orbital stability for the standing wave when $1<p<1+\frac{4}{n}$. An interesting result given in  \cite{shatahstrauss} by Shatah and Strauss proved that the standing wave solution is orbitally unstable when $p\geq 1+\frac{4}{n}$ and $|c|<1$. Wu in \cite{wu} proved the orbital instability for the standing wave when $n=1$, $1<p<5$, $c \in (-1,1)$ and $|c|=2^{-1}\sqrt{p-1}$. The main arguments used are a modulation argument combined with a virial identity according to the pioneer works \cite{lecoz} and \cite{martel}. Still in the case $n=1$ in the periodic context, Natali and Pastor in \cite{NP2008} studied the case $p=3$. It has been shown results about the orbital stability of standing waves of \textit{dnoidal} type and orbital instability of standing waves with \textit{cnoidal} type. The main tool to this end is the classical theory due to Grillakis \textit{et al.} in \cite{grillakis1} and \cite{grillakis2}. In \cite{cardoso2013}, Natali and Cardoso considered the case $p=5$ to show the orbital instability of dnoidal standing wave solutions in the Sobolev space $H_{per}^1 \times L^2_{per}$ restricted to the even periodic functions. The main tool was a computational approach based on \cite{neves} useful to decide the behaviour of the non-positive spectrum of the associated linearized operator. After that, the orbital instability was determined by using the arguments in \cite{grillakis1}. In \cite{AN1}, Angulo and Natali also showed the same result for the case $p = 5$ using a different approach to decide the quantity and multiplicity of non-positive eigenvalues of the associated linearized operator.\\
\indent The study of spectral stability of periodic waves has important contributions concerning the Klein-Gordon type equations in the real case, that is, for $u=u(x,t)\in\mathbb{R}$. In fact, Bronski \textit{et al.} in \cite{BronskiJohnsonKapitula} used the theory of quadratic pencils to prove the spectral stability associated to a general second order PDE (see also \cite{StanislavovaStefanovLinear} and \cite{StanislavovaStefanovSpectral} for related topics). To do so, they calculated the  Hamiltonian-Krein index $\mathcal{K}_{{\rm Ham}}$ and proving that $\mathcal{K}_{{\rm Ham}}=0$ implies the spectral stability (for more details concerning the definition of Hamiltonian-Krein index, see \cite{DeconinckKapitula}).  In the complex case, Demirkaya \textit{et al.} in \cite{DemirkayaHakkaevStanislavovaStefanov} obtained the spectral stability of periodic waves associated to the Klein-Gordon equation $(\ref{KGShatah})$ for $n=1$, $p=3$ and $p=5$. In both cases, explicit positive solutions of dnoidal type have been determined and they proved the spectral stability of these waves. In both cases $p=3$ and $p=5$, it is well known that a cnoidal profile is also a periodic solution but questions concerning the spectral/orbital stability never has been treated in the current literature so far. Our intention is to give a positive answer concerning the orbital stability for the case $p=5.$\\
\indent We first describe our paper for the case of QKG equation. It is well known that $(\ref{KF2})$ can be seen as abstract Hamiltonian  by considering 
\begin{equation}\label{J}
J=\begin{pmatrix}
0 & 0 & 0 & 1 \\ 
0 & 0 & -1 & 0 \\ 
0 & 1 & 0 & 0 \\ 
-1 & 0 & 0 & 0
\end{pmatrix},
\end{equation}
the equation $(\ref{KF2})$ can be reduced in a simple form as
\begin{equation}\label{hamiltonian-kleingordon}
	\frac{d}{dt}U(t)=J E'(U),
\end{equation} 
where $U=(u,u_t)=(u,v)$ and $E'$ indicates the Fr\'echet derivative of the conserved quantity \linebreak $E:H^1_{per}([0,L]) \times L^2_{per}([0,L]) \rightarrow \mathbb{R}$ given by 
\begin{equation}\label{E}
E(u,v)
=\dfrac{1}{2}\int_{0}^{L} \left(|u_x|^2+|v|^2+|u|^2-\frac{|u|^6}{3}\right) \;dx.
\end{equation}

\indent Moreover, \eqref{KF2} has another conserved quantity  $F:H^1_{per}([0,L]) \times L^2_{per}([0,L]) \rightarrow \mathbb{R}$ given by
\begin{equation}\label{F}
F(u,v)=\text{Im} \int_0^L \bar{u}u_t\; dx= \int_0^L \left( \text{Re}\,u \; \text{Im}\,u_t-\text{Im}\,u \; \text{Re}\,u_t\right) \; dx.
\end{equation}

An important mathematical aspect concerning equation \eqref{KF2} is the existence of periodic standing waves solutions of the form
\begin{equation}\label{PW}
u(x,t)=e^{ict}\varphi_c(x),
\end{equation}
where $c \in \mathbb{R}$ represents the wave frequency and $\varphi=\varphi_c: \mathbb{R}\longrightarrow \mathbb{R}$ is an $L-$periodic  smooth function. Substituting \eqref{PW} into \eqref{KF2}, we have that $\varphi$ satisfies the following second order ordinary differential equation
\begin{equation}\label{KF3}
-\varphi''+\omega\varphi-\varphi^{5}=0,
\end{equation}
where $\omega:=1-c^2>0$.

In the periodic context and for a fixed $L>2\pi$, one can find an explicit solution for the equation \eqref{KF3} which depends on the Jacobi elliptic function of \textit{cnoidal} type as
\begin{equation}\label{Sol2}
\varphi(x)=\frac{a\,{\rm cn}\left(bx,k \right)}{\sqrt{1-q\,{\rm sn}^2(bx,k)}},
\end{equation} 
where $k\in(0,1)$ is called modulus of the elliptic function and the quadruple $(a,b,c,q) \in \mathbb{R}^4$ depend smoothly on the parameter $k \in (0,1)$.

We specify the dependence of $c$ in terms of $k\in(0,1)$ for a fixed $L>2\pi$. In fact, for $c \in (-1,1)$, we see that $1-c^2=\omega \in (0,1]$. Parameter $\omega$ is then given by
\begin{equation}\label{wcn}
\omega = {\frac {-16 \left( \K\left( k \right)  \right)^{2}
 \left(\left( -{k}^{2}+1 \right) \sqrt{{k}^{4}-{k}^{2}+1}+{k}^{4}-{
k}^{2}+1 \right)}{\left({k}^{2}-1-\sqrt{{k}^{4}-{k}^{2}+1}
 \right){L}^{2}}}.
\end{equation}

In this way, we managed to construct a smooth curve of solutions
$$c \in I=(-1,1)  \longmapsto\varphi \in H_{per,e}^2,$$
all of them with the same period $L>2\pi$. Here, $H_{per,e}^2$ indicates the space $H_{per}^2$ constituted by even functions. 

In addition, we define for $c \in I$, the Lyapunov functional 
$$G(u,v)=E(u,v)-cF(u,v).$$ 
From \eqref{KF3}, we obtain $G'(\varphi,c\varphi,0,0)=0$, that is, $(\varphi,c\varphi,0,0)$ is a critical point of $G$. 
Thus, the second step is to study the quantity and multiplicity of the non-positive eigenvalues associated to the linearized operator
$\mathcal{L}:= G''(\varphi,c\varphi,0,0)$ given by 
\begin{equation}\label{matrixop}
\mathcal{L}=\begin{pmatrix}
\vec{\mathcal{L}_1} & 0 \\
0 & \vec{\mathcal{L}_2}
\end{pmatrix}, 
\end{equation}
where
\begin{equation}\label{matrixopdiagonal}
\vec{\mathcal{L}_1}=\begin{pmatrix}
-\partial_x^2+1-5\varphi^4 & -c \\ 
-c & 1
\end{pmatrix} 
\qquad \text{and} \qquad
\vec{\mathcal{L}_2}=\begin{pmatrix}
-\partial_x^2+1-\varphi^4 & c \\ 
c & 1
\end{pmatrix},
\end{equation}
restricted to the even periodic space. To do so, it is enough to study the non-positive spectrum of the following Hill operators
\begin{equation}\label{Hilloperators12}
\mathcal{L}_1= -\partial_x^2+\omega-5\varphi^4 \quad \text{and} \quad \mathcal{L}_2= -\partial_x^2+\omega-\varphi^4.
\end{equation}

According to the arguments contained in \cite{neves}, we are enabled to prove that the operator $\mathcal{L}_2$ has a unique negative eigenvalue which is simple. Concerning operator $\mathcal{L}_1$, we determine that the number of negative eigenvalues is two. To obtain the same scenario for the operators $\mathcal{L}_i$, $i=1,2$ restricted to the even periodic functions, we need to use some addition arguments of the general theory for differential operators with periodic potentials in \cite{brown}. Thus, denoting by $\mathcal{L}_e$ the restriction of $\mathcal{L}$ in $(\ref{matrixop})$ to the even periodic functions, we obtain that the kernel of $\mathcal{L}_e$ is one dimensional and the number of negative eigenvalues, indicated by $\text{n}(\mathcal{L}_{e})$, is also three. By considering $\text{p}(d'') \in \mathbb{N}$ as the number of positive eigenvalues associated to the second derivative of the function  $d: I \rightarrow \mathbb{R}$ given by $d(c)=G(\varphi,c\varphi,0,0)$
we obtain, according with \cite{grillakis1} that if $\text{n}(\mathcal{L}_{e})-\text{p}(d'')$ is an odd number, the periodic wave $\varphi$ is linearly unstable for perturbations restricted to the even periodic functions. To show the orbital (nonlinear) instability, we employ the main result in \cite{ShatahStraussbook} (see also \cite[Theorem 3.2]{NataliPastor2014}).

\indent Now, our focus concerns equation $(\ref{NLS-equation})$ and related topics. Le Coz in \cite{lecozNLS} has determined the existence and orbital stability of standing wave solutions of the form $u(x,t)=e^{i\omega t}\varphi(x)$ for the $n$-dimensional Schr\"odinger equation with power nonlinearity given by
\begin{equation}\label{LecozNLS1}
	iu_t+\Delta u+|u|^{p-1}u=0,
\end{equation}
when $1<p<1+\tfrac{4}{n}$ and $\omega>0$. He used  the concentration-compactness principle due to Cazenave and Lions \cite{cazenave} to show the existence of a ground state $\varphi$ which solves the equation
\begin{equation}\label{ellipti}-\Delta\varphi+\omega\varphi-\varphi^p=0.
\end{equation} For the orbital stability, he employed the classical method in \cite{grillakis1}. Weinstein in \cite{weinsteinNLS} considered the same equation as $(\ref{LecozNLS1})$ for $p=2\sigma+1$ and  $\varphi$ a ground state solution which solves $(\ref{ellipti})$. He proved the orbital stability in $H^1(\mathbb{R}^n)$ for the case $\sigma<\tfrac{2}{n}$ with $n=1$ and $n=3$.

Concerning equation $(\ref{LecozNLS1})$ for the case $n=1$ in the periodic context, Angulo in \cite{angulo} established for the case $p=3$, the orbital stability of periodic standing waves solutions $\varphi$ with \textit{dnoidal} profile of the form $\varphi(x)=a\,{\rm dn}(bx,k)$ by combining the ideas  in \cite{bona}  and \cite{weinsteinNLS}. Moreover, it has been described that it is not possible to decide about the orbital stability of periodic waves with \textit{cnoidal} profile as $\varphi(x)=a\,{\rm cn}(bx,k)$ using the arguments in  \cite{grillakis1}. In a convenient interval $(0,\omega_1$), Gustafson \textit{et al.} in \cite{gustafson} established spectral stability results for the \textit{cnoidal} waves with respect to perturbations with the same period  $L$ and orbital stability results in the space  constituted by anti-periodic functions with period $L/2$. The orbital stability of periodic \textit{cnoidal} waves was determined by Natali \textit{et al.} in \cite{NMLP2021} in the same interval $(0,\omega_1)$ as above. However, the authors have restricted the analysis over the Sobolev space $H_{per}^1$ constituted by zero mean periodic functions. For the case $p=5$, Angulo and Natali in \cite{AN2} showed the existence of a unique $\omega^{*}>\frac{\pi^2}{L^2}$ such that the periodic wave with dnoidal profile as
\begin{equation}\label{sol21}
	\varphi(x)= \frac{\alpha \,{\rm dn}\left(\varrho x,k \right)}{\sqrt{1-\eta\,{\rm sn}^2(\varrho x,k)}}
\end{equation}
is orbitally stable for all $\omega\in\left(\frac{\pi^2}{L^2},\omega^{*}\right)$ and orbitally unstable for all $\omega\in (\omega^{*},+\infty)$.

\indent Concerning the spectral stability of periodic standing waves for the equation $(\ref{NLS-equation})$, we can cite  \cite{HakkaevStanislavovaStefanov}. In this case, Hakkaev \textit{et al.} established spectral stability results of positive standing waves for the equation $(\ref{NLS-equation})$ with dnoidal profile. Using numerical tools, the authors determined the existence of $\omega^* > 0$ such that for  $\omega \in (0, \omega^*)$ the wave is spectrally stable and for $\omega \in (\omega^*, \infty)$ is spectrally unstable.


\indent We shall give a brief explanation of our results concerning equation $(\ref{NLS-equation})$. Notice that \eqref{NLS-equation} can be also seen as an abstract Hamiltonian equation by considering $U = (\text{Re}\,u, \text{Im}\,u)$ as a solution of
\begin{equation}\label{hamiltonian-NLS}
	\frac{d}{dt} U(t) = \mathcal{J}\mathcal{E}'(U(t)),
\end{equation}
where
\begin{equation}\label{J-NLS}
	\mathcal{J}=\begin{pmatrix}
		0 & 1 \\ 
		-1 & 0  \\
	\end{pmatrix},
\end{equation}
and $\mathcal{E}'$ represents the Fréchet derivate of the conserved quantity $\mathcal{E} : H^1_{per}([0,L]) \rightarrow \mathbb{R}$ given by 
\begin{equation}\label{E-NLS}
	\mathcal{E}(U):= \mathcal{E}(u) = \frac{1}{2} \int_{0}^{L} \left( |u_x|^2 - \frac{1}{3} |u|^6 \right) dx.
\end{equation}
In addition, equation \eqref{NLS-equation} has another conserved quantity $\mathcal{F}: H^1_{per}([0,L]) \rightarrow \mathbb{R}$ expressed by
\begin{equation}\label{F-NLS}
	\mathcal{F}(u) = \frac{1}{2} \int_{0}^{L} |u|^2 dx.
\end{equation}

Equation (\ref{NLS-equation}) admits periodic standing wave solutions of the form
\begin{equation}\label{NLS-1}
	u(x,t) = e^{i\omega t} \varphi_\omega(x),
\end{equation}
where $\omega \in \mathbb{R}$ and $\varphi:= \varphi_\omega: \mathbb{R} \rightarrow \mathbb{R}$ is a smooth and $L$-periodic function. Substituting (\ref{NLS-1}) into (\ref{NLS-equation}), we obtain the following ordinary differential equation
\begin{equation}\label{NLS-2}
	-\varphi'' + \omega\varphi - \varphi^5 = 0,
\end{equation}
which is similar to (\ref{KF3}). As we have already seen before, for $\omega>0$, equation (\ref{NLS-2}) has solutions depending on the Jacobi elliptic function of cnoidal type as in \eqref{Sol2}. The wave frequency $\omega > 0$ is given explicitly by the relation (\ref{wcn}).

For a fixed $L>0$, we are enabled to construct a smooth curve of even periodic waves which solves $(\ref{KF3})$ and it is given by 
$$\omega \in \mathcal{I}=\left(\frac{4\pi^2}{L^2},+\infty\right)   \longmapsto \varphi_{\omega} \in H_{per,e}^2.$$
In addition, we define for $\omega \in \mathcal{I}$, the Lyapunov functional 
$$\mathcal{G}(u)=\mathcal{E}(u)+\omega \mathcal{F}(u).$$ 
From \eqref{KF3}, we obtain $\mathcal{G}'(\varphi,0)=0$, that is, $(\varphi,0)$ is a critical point of $\mathcal{G}$. 
 Next,  we study the exact behaviour of the non-positive spectrum related to the linearized operator
$\mathcal{L}:= \mathcal{G}''(\varphi,0)$ given by 
\begin{equation}\label{matrixop2}
\mathcal{L}=\begin{pmatrix}
\mathcal{L}_1 & 0 \\
0 & \mathcal{L}_2
\end{pmatrix}, 
\end{equation}
where $\mathcal{L}_1$ and $\mathcal{L}_2$ are given by \eqref{Hilloperators12}. Considering $\mathsf{d}(\omega)=\mathcal{G}(\varphi,0)$ and denoting by $\mathcal{L}_e$  the restriction of $\mathcal{L}$ over the subspace of even functions, we are enabled to conclude that the kernel of $\mathcal{L}_e$ is one dimensional and ${\rm n}(\mathcal{L}_e)=3$. Thus, we conclude that ${\rm n}(\mathcal{L}_e)-{\rm p}(\mathsf{d}'')$ is an odd number and, as in the case of the QKG equation, we conclude by \cite{grillakis2}, the linear instability of the wave $\varphi$. The orbital (nonlinear) instability can be determined similarly using the main result of \cite{ShatahStraussbook}.

Our paper is organized as follows: in Section \ref{section2} we present some basic notations. In Section \ref{section3}, we show the existence of a smooth curve of periodic standing wave solutions of cnoidal type for the equations \eqref{KF2} and \eqref{NLS-equation}. A brief introduction concerning the classical Floquet theory and a spectral analysis for the operators $\mathcal{L}$ and $\mathcal{L}_e$ are established in Section \ref{section4}. Finally, the orbital instability of  periodic standing waves with cnoidal profile is shown in Sections \ref{section5} and \ref{section6}.

\section{Notation}\label{section2}

\indent For $s\geq0$ and $L>0$, the (real) Sobolev space
$H^s_{per}:=H^s_{per}([0,L])$
consists of all periodic distributions $f$ such that
$$
\|f\|^2_{H^s_{per}}:= L \sum_{k=-\infty}^{\infty}(1+k^2)^s|\hat{f}(k)|^2 <\infty
$$
where $\hat{f}$ is the periodic Fourier transform of $f$. The space $H^s_{per}$ is a  Hilbert space with the inner product denoted by $(\cdot, \cdot)_{H^s}$. When $s=0$, the space $H^s_{per}$ is isometrically isomorphic to the space  $L^2([0,L])$ and will be denoted by $L^2_{per}:=H^0_{per}$ (see, e.g., \cite{Iorio}). The norm and inner product in $L^2_{per}$ will be denoted by $\|\cdot \|_{L^2}$ and $(\cdot, \cdot)_{L^2}$. To avoid an overloading of notation, we omit the interval $[0, L]$ of the space $H^s_{per}([0,L])$ and we denote it simply by $H^s_{per}$.

For $s\geq0$, we denote
$
H^s_{per,e}:=\{ f \in H^s_{per} \; ; \; f \:\; \text{is an even function}\}.
$
Endowed with the norm and inner product in $H^s_{per}$. In addition, we denote the Sobolev space $\mathbb{H}_{per}^s$ concerning the complex function $f=f_1+if_2$ as
$$
\mathbb{H}^s_{per}:= H^s_{per} \times H^s_{per}, \quad \mathbb{H}^s_{per,e}:=H^s_{per,e} \times H^s_{per,e} \quad \text{and} \quad \mathbb{L}_{per}^2:= L^2_{per} \times L^2_{per}
$$
equipped with their usual norms and scalar products.


The symbols $\sn(\cdot, k), \dn(\cdot, k)$ and $\cn(\cdot, k)$ represent the Jacobi elliptic functions of \textit{snoidal}, \textit{dnoidal}, and \textit{cnoidal} type, respectively. For $k \in (0, 1)$, ${\rm F}(\phi, k)$ and $\E(\phi, k)$  denote the complete elliptic integrals of the first and second kind, respectively, and  we denote by $\K(k)={\rm F}\left(\frac{\pi}{2},k\right)$ and $\E(k)=\E\left(\frac{\pi}{2},k\right)$, (see \cite{byrd}).



\section{Existence of a Smooth Curve of Periodic Waves of Cnoidal Type}\label{section3}
Let $L>0$ be fixed. Our purpose in this section is to present the existence of
$L$-periodic solutions $\varphi: \mathbb{R} \longrightarrow \mathbb{R}$ associated to the the ordinary differential equation 
\begin{equation}\label{ode1}
- \varphi''+\omega\varphi-\varphi^{5}=0.
\end{equation}
Concerning the case of the equation $(\ref{KF2})$, we have $\omega=1-c^2 \in (0,1)$. When $(\ref{NLS-equation})$ is being considered, we only assume $\omega >0 $ without the dependence of $c$.

In \cite{AN2} (see also \cite{cardoso2013}) the authors put forwarded that \eqref{ode1} admits periodic solution wave with dnoidal profile as
$$ 
\phi(x)=\frac{\alpha \,{\rm dn}\left(\varrho x,k \right)}{\sqrt{1-\eta\,{\rm sn}^2(\varrho x,k)}},
$$
where $k\in(0,1)$ and $\alpha, \varrho, \eta$ depend smoothly on $k \in (0,1)$. Motivated by the work in \cite{AN2}, we can consider the ansatz
\begin{equation}
\varphi(x)=\frac{a\,{\rm cn}\left(\frac{4\K(k)}{L}x ,k \right)}{\sqrt{1-q\,{\rm sn}^2\left(\frac{4\K(k)}{L}x,k\right)}},
\label{cnsol}\end{equation} 
in \eqref{ode1} to obtain that $\varphi$ is a periodic cnoidal solution of \eqref{ode1}. Parameters  $a,q \in \mathbb{R}$ are given by
\begin{equation}\label{valuea}
a =\frac{2(\K(k)^2(k^2-1-\sqrt{(k^4-k^2+1})(-k^2-1-\sqrt{k^4-k^2+1})L^2)^{1/4}}{L}
\end{equation}
and
\begin{equation}\label{valued}
q =k^2-1-\sqrt{k^4-k^2+1}.
\end{equation}

Moreover, $\omega$ can be expressed as
\begin{eqnarray}\label{valuew}
\omega & = & \frac{L^2a^2-16q^2k^2\K(k)^2+16qk^2\K(k)^2}{L^2q^2} \nonumber \\
& = &  {\frac {-16 \left( \K\left( k \right)  \right)^{2}
 \left(\left( -{k}^{2}+1 \right) \sqrt{{k}^{4}-{k}^{2}+1}+{k}^{4}-{
k}^{2}+1 \right)}{\left({k}^{2}-1-\sqrt{{k}^{4}-{k}^{2}+1}
 \right){L}^{2}}}.
\end{eqnarray}
For a fixed $L>0$, it is clear that the parameters $a,q$ and $\omega$ in $(\ref{valuea})-(\ref{valuew})$ depend smoothly on the parameter $k \in (0,1)$. Since the dependence of $\omega$ in terms of $k$ is strictly monotonic, we see that $a$ and $q$ also depend smoothly on $\omega$. For each $c \in (-1,1)$, we can determine a smooth curve $c\in(-1,1)\rightarrow \varphi_c$ of periodic solutions for the equation \eqref{ode1}. However, it should be noticed that by \eqref{valuew} we have  $ \omega \in \scriptstyle \left( \frac{4\pi^2}{L^2},1\right)$ and since $\omega=1-c^2$,  we need to assume $L>2\pi$.  We can establish the following result.

\begin{theorem}\label{cnoidalcurve}
Let $L>2\pi$ be fixed. Equation \eqref{ode1} has $L$-periodic solutions with cnoidal profile given by $(\ref{cnsol})$
where the parameters $a \in \left(\frac{2\sqrt{\pi}}{\sqrt{L}}, +\infty\right)$, $q \in (-2,-1)$ and $\omega \in I:= \left(\tfrac{4\pi^2}{L^2},1\right)$ depend smoothly on $k \in (0,1)$ and they are given by \eqref{valuea}, \eqref{valued} and \eqref{valuew}, respectively. In addition, since $\omega=1-c^2$ one has that
$$
c \in I  \longmapsto \varphi=\varphi_{c} \in H^2_{per,e}([0,L])
$$
is a smooth curve of periodic solutions for \eqref{ode1}.
\end{theorem}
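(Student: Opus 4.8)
The plan is to prove the theorem in three movements: (i) verify by direct substitution that the ansatz \eqref{cnsol} solves \eqref{ode1}, which simultaneously pins down the parameter relations \eqref{valuea}--\eqref{valuew}; (ii) read off the ranges together with the well-definedness and regularity of $\varphi$; and (iii) establish the strict monotonicity of $k\mapsto\omega$ that allows the reparametrization by $\omega$, and hence by $c$.

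For (i) I would set $s=\sn(bx,k)$ with $b=\tfrac{4\K(k)}{L}$ and use the identities $\cn^2=1-s^2$, $\dn^2=1-k^2s^2$ together with the differentiation rules $\tfrac{d}{dx}\sn=b\,\cn\dn$, $\tfrac{d}{dx}\cn=-b\,\sn\dn$, $\tfrac{d}{dx}\dn=-bk^2\sn\cn$. A convenient simplification occurs already at first order: for $\varphi=a\,\cn(1-qs^2)^{-1/2}$ one finds $\varphi'=ab(q-1)\,\sn\dn\,(1-qs^2)^{-3/2}$, after which $\varphi''$ is again a rational expression whose only transcendental dependence is through $t=\sn^2$. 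Substituting into $-\varphi''+\omega\varphi-\varphi^5$ and factoring out the common $a\,\cn\,(1-qs^2)^{-5/2}$ reduces the equation to requiring that a quadratic polynomial in $t$ vanish identically. Its three coefficients give three algebraic equations in $(a^4,\omega,q)$; eliminating $a^4$ and $\omega$ forces $q^2+2(1-k^2)q-k^2=0$, whose admissible (negative) root is \eqref{valued}, and back-substitution returns \eqref{valuea} and the identity $\omega=-b^2\tfrac{k^2q+1}{q+1}$, which collapses to the closed form \eqref{valuew}. This coefficient matching is the computational core and the step I expect to be the most laborious, though it is entirely mechanical.

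For (ii) I would record that \eqref{valued} gives $q<0$, so $1-q\sn^2\ge1>0$ everywhere; hence $\varphi$ is smooth and bounded, it is even because $\cn$ and $\sn^2$ are even, and the choice $b=\tfrac{4\K(k)}{L}$ makes $\varphi$ $L$-periodic, so $\varphi\in H^2_{per,e}([0,L])$. The ranges then follow from boundary limits: $q=k^2-1-\sqrt{k^4-k^2+1}$ is strictly increasing with $q\to-2$ as $k\to0^+$ and $q\to-1$ as $k\to1^-$; and, using $a^4=\tfrac{16\K(k)^2}{L^2}(2+2\sqrt{k^4-k^2+1}-k^2)$, one gets $a\to\tfrac{2\sqrt\pi}{\sqrt L}$ as $k\to0^+$ and $a\to+\infty$ as $k\to1^-$, giving $a\in\left(\tfrac{2\sqrt\pi}{\sqrt L},+\infty\right)$.

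For (iii), the key simplification noted above is $\omega=\tfrac{16\,\K(k)^2\sqrt{k^4-k^2+1}}{L^2}$, from which $\omega\to\tfrac{4\pi^2}{L^2}$ as $k\to0^+$. It remains to prove that $k\mapsto\omega$ is strictly increasing; granted this, the inverse function theorem yields a smooth inverse $k=k(\omega)$, so $a$, $q$ and the map $k\mapsto\varphi\in H^2_{per,e}$ all become smooth functions of $\omega$, and composing with the smooth change $\omega=1-c^2$ produces the smooth curve $c\mapsto\varphi_c$. This monotonicity is the real obstacle: writing $R=\sqrt{k^4-k^2+1}$ one has $\tfrac{L^2}{16}\,\tfrac{d\omega}{dk}=2\K\K'R+\K^2R'$, and since $R'<0$ on $\left(0,\tfrac1{\sqrt2}\right)$ the two terms compete, so a termwise sign argument fails. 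I would control the sign by inserting $\K'(k)=\tfrac{\E(k)-(1-k^2)\K(k)}{k(1-k^2)}$ and invoking the standard inequalities between $\K$ and $\E$ (for instance $\E<\K$ and $(1-k^2)\K<\E$), which is where the genuine analytic work of the proof lies.
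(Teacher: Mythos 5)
Your route is the same one the paper takes: substitute the ansatz \eqref{cnsol} into \eqref{ode1}, match coefficients to get \eqref{valuea}--\eqref{valuew}, then reparametrize by $\omega$ (hence by $c$) via strict monotonicity of $k\mapsto\omega$. In fact the paper offers nothing beyond this outline --- in particular the monotonicity is simply asserted, not proved --- and your algebra in steps (i)--(ii) is correct and goes further than the paper does: the quadratic $q^2+2(1-k^2)q-k^2=0$, the identity $\omega=-b^2\,\tfrac{k^2q+1}{q+1}=\tfrac{16}{L^2}\K(k)^2\sqrt{k^4-k^2+1}$, the formula $a^4=\tfrac{16\K(k)^2}{L^2}\left(2+2\sqrt{k^4-k^2+1}-k^2\right)$, and the stated parameter ranges all check out.

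The genuine gap is in step (iii), exactly where you locate ``the genuine analytic work'', and the specific tools you name would not close it. Write $R(k)=\sqrt{k^4-k^2+1}$. Inserting $\K'(k)=\tfrac{\E(k)-(1-k^2)\K(k)}{k(1-k^2)}$ into $2\K\K'R+\K^2R'>0$ and clearing positive factors shows that $\omega'(k)>0$ on $(0,1)$ is equivalent to
\begin{equation*}
2\,\E(k)\left(k^4-k^2+1\right)\;>\;\K(k)\,(1-k^2)(2-k^2).
\end{equation*}
The inequality $\E<\K$ points the wrong way (you need $\E$ bounded \emph{below} by a multiple of $\K$), while $(1-k^2)\K<\E$ reduces the display to $2(k^4-k^2+1)\geq 2-k^2$, i.e.\ $k^2\geq\tfrac12$ --- false precisely on the interval $\left(0,\tfrac{1}{\sqrt2}\right)$ where $R'<0$ and an estimate is actually needed. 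The obstruction is structural: at $k=0$ both sides of the display equal $\pi$ and their expansions agree through order $k^2$, the difference being $\tfrac{15\pi}{16}k^4+O(k^6)$, so any bound that is lossy at order $k^2$ (as both of your named inequalities are) cannot succeed. A repair that stays within your scheme is to use the sharper inequality $\E(k)>\sqrt{1-k^2}\,\K(k)$ for $k\in(0,1)$: write $\K=\int_0^{\pi/2}(1-k^2\sin^2\theta)^{1/4}\,(1-k^2\sin^2\theta)^{-3/4}\,d\theta$ and apply Cauchy--Schwarz together with the standard formula $\int_0^{\pi/2}(1-k^2\sin^2\theta)^{-3/2}\,d\theta=\tfrac{\E(k)}{1-k^2}$ (see \cite{byrd}), which gives $\K\leq \E/\sqrt{1-k^2}$ with equality only at $k=0$. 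With this bound the display follows from the purely algebraic inequality $2(m^2-m+1)\geq\sqrt{1-m}\,(2-m)$, $m=k^2$, which after squaring becomes $m^2(4m^2-7m+7)\geq0$ and holds for all $m\in(0,1)$; hence $\omega$ is strictly increasing on all of $(0,1)$ and your inverse-function argument goes through. (A smaller instance of the same phenomenon: in (ii) the two boundary limits of $a$ do not by themselves give $a>\tfrac{2\sqrt{\pi}}{\sqrt{L}}$ for every $k$, since $a^4-\tfrac{16\pi^2}{L^2}=\tfrac{9\pi^2}{2L^2}k^4+O(k^6)$ also degenerates at $k=0$; a monotonicity or positivity argument for $\K(k)^2\left(2+2R(k)-k^2\right)$ is needed there too, though on this point you are no less rigorous than the paper itself.)
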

\indent Concerning the QNLS $(\ref{NLS-equation})$, we have a similar result.
\begin{theorem}\label{cnoidalcurveNLS}
Let $L>0$ be fixed. Equation \eqref{ode1} has $L$-periodic solutions with cnoidal profile given by $(\ref{cnsol})$
where the parameters $a \in \left(\frac{2\sqrt{\pi}}{\sqrt{L}}, +\infty\right)$, $q \in (-2,-1)$ and $\omega \in \mathcal{I}:=\left(\tfrac{4\pi^2}{L^2},+\infty\right)$ depend smoothly on $k \in (0,1)$ and they are given by \eqref{valuea}, \eqref{valued} and \eqref{valuew}, respectively. In addition, one has that
$$
\omega \in \mathcal{I} \longmapsto \varphi=\varphi_{\omega} \in H^2_{per,e}([0,L])
$$
is a smooth curve of periodic solutions for \eqref{ode1}.
\end{theorem}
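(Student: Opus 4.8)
The plan is to verify the ansatz \eqref{cnsol} by direct substitution, read off the admissible ranges of the parameters from their closed forms as $k$ runs over $(0,1)$, and then invert the relation $k\mapsto\omega$. Since \eqref{ode1} is exactly the profile equation already treated in Theorem \ref{cnoidalcurve}, the bulk of the computation is shared and only the admissible range of $\omega$ changes. I would begin by recording the structural facts about \eqref{cnsol}: because $\cn(\cdot,k)$ has fundamental period $4\K(k)$ and $\sn^2(\cdot,k)$ has period $2\K(k)$, the choice of argument $\frac{4\K(k)}{L}x$ makes both numerator and denominator $L$-periodic, so $\varphi$ is $L$-periodic, and it is even because $\cn(\cdot,k)$ and $\sn^2(\cdot,k)$ are even. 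Since $q<0$ we have $1-q\,\sn^2\ge 1>0$, so the denominator never vanishes and $\varphi$ is smooth; in particular $\varphi\in H^2_{per,e}([0,L])$.

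The computational core is the substitution itself. Writing $b=\frac{4\K(k)}{L}$ and using $\sn'=\cn\dn$, $\cn'=-\sn\dn$, $\dn'=-k^2\sn\cn$ together with $\sn^2+\cn^2=1$ and $\dn^2=1-k^2\sn^2$, one finds that $\varphi'$ collapses to $ab(q-1)\,\sn\dn\,(1-q\,\sn^2)^{-3/2}$, and differentiating once more gives $\varphi''$ as $\cn\,(1-q\,\sn^2)^{-5/2}$ times a quadratic polynomial in $\sn^2$. Substituting into \eqref{ode1} and clearing the common factor $\cn\,(1-q\,\sn^2)^{-5/2}$ reduces the equation to a polynomial identity that is quadratic in $\sn^2$. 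Matching the three coefficients (of $\sn^0,\sn^2,\sn^4$) produces three algebraic relations in $(a,q,\omega)$, whose solution reproduces exactly $q=k^2-1-\sqrt{k^4-k^2+1}$ of \eqref{valued}, the expression \eqref{valuea} for $a$, and \eqref{valuew} for $\omega$. This step is lengthy but entirely mechanical.

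Next I would extract the ranges from the explicit formulas. For $q$, a short computation shows $q'(k)>0$ on $(0,1)$ (after squaring, the required inequality reduces to $4>1$), so $q$ increases strictly from $q(0^+)=-2$ to $q(1^-)=-1$, giving $q\in(-2,-1)$. For $\omega$, using $\K(0)=\tfrac{\pi}{2}$ one gets $\omega(0^+)=\frac{4\pi^2}{L^2}$, while $\K(k)\to+\infty$ forces $\omega(1^-)=+\infty$; the same endpoints yield $a(0^+)=\frac{2\sqrt{\pi}}{\sqrt{L}}$ and $a(1^-)=+\infty$. Granting that $k\mapsto\omega(k)$ is strictly monotonic, it is a smooth bijection of $(0,1)$ onto $\mathcal{I}=\left(\frac{4\pi^2}{L^2},+\infty\right)$, so by the inverse function theorem $k=k(\omega)$ depends smoothly on $\omega$, and composing with the smooth dependence of $a,q,\varphi$ on $k$ yields the smooth curve $\omega\in\mathcal{I}\mapsto\varphi_\omega\in H^2_{per,e}([0,L])$. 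The only genuine difference from Theorem \ref{cnoidalcurve} is that the Schr\"odinger problem imposes no constraint $\omega=1-c^2<1$, so the full image $\left(\frac{4\pi^2}{L^2},+\infty\right)$ of $k\mapsto\omega(k)$ is now admissible, rather than its truncation at $\omega=1$.

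I expect the strict monotonicity of $\omega(k)$ to be the main obstacle. The expression \eqref{valuew} is a quotient of the transcendental factor $\K(k)^2$ and algebraic functions of $k$, so a direct sign analysis of $\frac{d\omega}{dk}$ is delicate: it would require controlling $\K'(k)$ against the algebraic terms, and the cleanest route is probably to factor $\omega$ into a product of manifestly increasing pieces or to compare $\frac{d}{dk}\log\omega$ term by term, rather than expanding the derivative directly.
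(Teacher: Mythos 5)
Your proposal follows essentially the same route as the paper: the paper also verifies the ansatz \eqref{cnsol} by direct substitution to obtain the formulas \eqref{valuea}--\eqref{valuew}, reads off the endpoint values (using $\K(0^+)=\tfrac{\pi}{2}$ and $\K(1^-)=+\infty$) to get the stated ranges, and then invokes the strict monotonicity of $k\mapsto\omega(k)$ to invert and obtain the smooth curve $\omega\in\mathcal{I}\mapsto\varphi_\omega$, with the sole difference from Theorem \ref{cnoidalcurve} being that no constraint $\omega<1$ is imposed. The monotonicity step you flag as the main obstacle is likewise only asserted, not proved, in the paper (cf.\ the remark before Theorem \ref{cnoidalcurve} and \eqref{positivitydw}), so your treatment is at the same level of rigor as the original.
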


\section{Spectral Analysis}\label{section4}

\subsection{Floquet Theory Framework} \label{Floquet}
Before presenting the spectral analysis concerning the operators in \eqref{matrixop} and \eqref{matrixop2}, we need to recall some basic facts about the Floquet theory (for further details see \cite{east} and \cite{magnus}).

Consider $\mathcal{P}=\mathcal{P}_s:H_{per}^2([0,L]) \subset L_{per}^2([0,L]) \longrightarrow L_{per}^2([0,L])$ the Hill operator, given by
\begin{equation}\label{operatorP}
\mathcal{P}=-\partial_x^2+g(s, \varphi (x)),
\end{equation}
where $g$ is a smooth potential depending on $\varphi$ and on a parameter $s \in \mathcal{V}\subset \mathbb{R}$.\\
\indent According to the Oscillation Theorem (see \cite[Theorem 2.1]{magnus}), the spectrum of $\mathcal{P}$  is formed by an unbounded sequence of
real eigenvalues $(\lambda_n)_{n \in \mathbb{N}} \subset \mathbb{R}$ so that
\[
\lambda_0 < \lambda_1 \leq \lambda_2 <\lambda_3 \leq \lambda_4 <
\cdots\; < \lambda_{2n-1} \leq \lambda_{2n}\; \cdots,
\]
where equality means that $\lambda_{2n-1} = \lambda_{2n}$  is a double eigenvalue. Moreover, the spectrum  is characterized by the number of zeros of the eigenfunctions as: if $p \in D(\mathcal{P})$ is an eigenfunction associated to either $\lambda_{2n-1}$ or $\lambda_{2n}$, then $p$  has exactly $2n$ zeros in the half-open interval $[0, L)$.

Let $p$ be a nontrivial periodic solution of the equation 
\begin{equation}\label{zeqL}
-f''+g(s, \varphi (x))f=0,
\end{equation}
where $f \in C_b^{\infty}([0,L]).$ Here $C_b^{\infty}([0,L])$  indicates the space constituted by bounded real-valued smooth functions. By the classical Floquet theory, we obtain the existence of a solution $y$ of \eqref{zeqL} which is linearly independent with $p$ such that $\{p,y\}$ is the fundamental basis of solutions for the  Hill equation \eqref{zeqL}. Moreover,  there exists  $\theta \in   \mathbb{R}$ (depending on $y$ and $p$) such that
\begin{equation}\label{theta0}
y(x+L)=y(x)+\theta p(x),\ \ \ \ \ \mbox{for all}\ x\in\mathbb{R}.
\end{equation}
Constant $\theta \in \mathbb{R}$ measures how function $y$ is periodic. To be more precise, $\theta=0$ if and only if $y$ is periodic. This criterion is very useful to establish if the kernel of $\mathcal{P}$ is $1$-dimensional by proving that $\theta\neq0$. Concerning this fact, we have the following result.

\begin{proposition}\label{theta}
If $p$ is an eigenfunction associated to the eigenvalue $\lambda_k \in \mathbb{R}$, for some $k \in \mathbb{N},$ and $\theta \in \mathbb{R}$ is the constant given by \eqref{theta0}, then $\lambda_k \in \mathbb{R}$ is simple if and only if $\theta \neq 0$. Moreover, if $p$ has $2n$ zeroes over $[0,L)$, then $\lambda_k=\lambda_{2n-1}$ if $\theta<0$, and $\lambda_k=\lambda_{2n}$ if $\theta>0$.
\end{proposition}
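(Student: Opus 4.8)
The plan is to analyze the period (monodromy) map of the Hill equation \eqref{zeqL} and to tie the Floquet constant $\theta$ to the Hill discriminant $\Delta(\lambda):=\operatorname{tr}M(\lambda)$, where $\Phi(x,\lambda)$ is the fundamental matrix solution of $\mathcal{P}f=\lambda f$ with $\Phi(0,\lambda)=I$ and $M(\lambda)=\Phi(L,\lambda)$. Since \eqref{zeqL} has no first-order term, Abel's identity gives $\det M(\lambda)\equiv 1$, so the periodic eigenvalues of $\mathcal{P}$ are exactly the points where $\Delta(\lambda)=2$, and $\lambda_k$ fails to be simple precisely when $M(\lambda_k)=I$, i.e.\ when every solution of \eqref{zeqL} is $L$-periodic. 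I would first settle the simplicity criterion from this. If $\theta=0$, then by \eqref{theta0} the second solution $y$ is itself $L$-periodic; as $\{p,y\}$ is a fundamental system, the whole solution space is periodic, the eigenspace is two-dimensional and $\lambda_k$ is a double eigenvalue. Conversely, if $\theta\neq0$, then $y(x+L)-y(x)=\theta p(x)\not\equiv 0$ shows $y$ is not periodic, so the periodic solutions are exactly the multiples of $p$ and $\lambda_k$ is simple.

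The heart of the argument is an identity controlling the sign of $\theta$ through $\Delta'(\lambda_k)$. I would freeze the initial data and let $p(\cdot,\lambda)$ be the solution with $p(\cdot,\lambda_k)=p$, so that $\dot p:=\partial_\lambda p$ solves the inhomogeneous equation $(\mathcal{P}-\lambda_k)\dot p=p$ with $\dot p(0)=\dot p'(0)=0$. Differentiating the monodromy relation $(p(x+L,\lambda),p'(x+L,\lambda))^{\mathsf T}=\Phi(x,\lambda)M(\lambda)(p(0),p'(0))^{\mathsf T}$ at $\lambda_k$ and using $M(\lambda_k)(p(0),p'(0))^{\mathsf T}=(p(0),p'(0))^{\mathsf T}$ shows that the non-periodic part $h:=\dot p(\cdot+L)-\dot p$ is a homogeneous solution whose $y$-component equals $\Delta'(\lambda_k)/\theta$; here the constraint $\det M\equiv 1$ is used to evaluate $\dot M(\lambda_k)$ in the basis adapted to $p$. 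On the other hand, the Wronskian identity $\frac{d}{dx}W[p,\dot p]=-p^2$, together with the periodicity of $p^2$, gives $W[p,h]\equiv-\int_0^L p^2\,dx$, so the same $y$-component equals $-\int_0^L p^2\,dx\,/\,W[p,y]$. Equating the two expressions yields
\[
\theta=-\frac{W[p,y]}{\int_0^L p^2\,dx}\,\Delta'(\lambda_k),
\]
so that, with the standard orientation $W[p,y]>0$ of the Floquet pair, $\operatorname{sign}\theta=-\operatorname{sign}\Delta'(\lambda_k)$; in particular $\theta=0\Leftrightarrow\Delta'(\lambda_k)=0$, consistent with the simplicity criterion.

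Finally I would read off $\operatorname{sign}\Delta'(\lambda_k)$ from the oscillation picture. The periodic eigenvalues with $2n$ zeros are the endpoints $\lambda_{2n-1}\le\lambda_{2n}$ of the instability interval on which $\Delta>2$; since $\Delta(\lambda_{2n-1})=\Delta(\lambda_{2n})=2$ while $\Delta>2$ strictly inside, analyticity forces $\Delta'(\lambda_{2n-1})\ge0$ and $\Delta'(\lambda_{2n})\le0$, and simplicity ($\Delta'\neq0$) upgrades these to strict inequalities. Combined with the displayed identity this gives $\theta<0$ at $\lambda_{2n-1}$ and $\theta>0$ at $\lambda_{2n}$, as claimed. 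The main obstacle is the middle step: setting up $\dot p$ correctly, extracting the $y$-component of its non-periodicity from $\dot M(\lambda_k)$ (which genuinely requires the relation coming from $\det M\equiv1$), and matching it against the Wronskian computation. Fixing the orientation of the pair $\{p,y\}$ so that $\theta$ carries an unambiguous sign is the delicate point on which the precise statement rests.
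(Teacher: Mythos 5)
Your proposal is correct, but there is nothing in the paper to compare it against line by line: the paper does not prove Proposition \ref{theta} at all, it simply quotes it from \cite{neves} (Theorem 3.1). What you have written is in effect a reconstruction of the argument behind that cited result, and it holds up. The simplicity criterion is exactly as you say ($\theta=0$ iff the second Floquet solution is $L$-periodic iff the monodromy is the identity iff the eigenspace is two-dimensional), and your key identity is right: writing $\Psi=\begin{pmatrix}p&y\\p'&y'\end{pmatrix}$, the relation \eqref{theta0} gives $\Psi(0)^{-1}M(\lambda_k)\Psi(0)=N:=\begin{pmatrix}1&\theta\\0&1\end{pmatrix}$, and the variation-of-parameters formula for $\dot M$ then yields
\[
\Delta'(\lambda_k)=\frac{1}{W[p,y]}\,\mathrm{tr}\!\left[N\begin{pmatrix}\int_0^L py\,dx&\int_0^L y^2\,dx\\-\int_0^L p^2\,dx&-\int_0^L py\,dx\end{pmatrix}\right]=-\frac{\theta}{W[p,y]}\int_0^L p^2\,dx,
\]
which is your formula; it also checks against the test case $g\equiv0$, $\lambda_k=0$, $p\equiv1$, $y=x$, where $\theta=L$, $W[p,y]=1$, $\Delta'(0)=-L^2$. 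The sign-reading at the endpoints of the $\Delta>2$ instability intervals is standard and covers the lowest eigenvalue ($2n=0$ zeros, interval $(-\infty,\lambda_0)$) as well. Two cosmetic points: phrasing the $y$-component of $h$ as $\Delta'(\lambda_k)/\theta$ presupposes $\theta\neq0$; it is cleaner to state $\Delta'(\lambda_k)=\theta\beta$, which combined with your Wronskian computation $\beta=-\int_0^L p^2\,dx\,/\,W[p,y]$ gives the identity with no division. And the orientation issue you flag is genuine: the sign dichotomy is meaningful only under a normalization such as $W[p,y]>0$, which the paper imposes implicitly, since the initial-value problems \eqref{PVI1} and \eqref{PVI2} both fix $W[p,y]=1$.
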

\begin{proof}
See \cite[Theorem 3.1]{neves}.
\end{proof}

We also need the concept of isoinertial family of self-adjoint operators.

\begin{definition}\label{defi12}
Let $s \in \mathcal{V}$. The inertial index of the operator $\mathcal{P}_s$ is a pair ${\rm in}(\mathcal{P}_s):=(n,z)\in \mathbb{N}^2$, where $n \in \mathbb{N}$ denotes the dimension of the negative subspace of  and $z \in \mathbb{N}$ denotes the dimension of  $\Ker(\mathcal{P}_s)$.
\end{definition}

\begin{definition}\label{defi1234}
The family of linear operators  $\{\mathcal{P}_{s}=-\partial_x^2+g(s, \varphi)\;  ; \; s \in \mathcal{V}\}$ is said to be isoinertial if
 ${\rm in}(\mathcal{P}_{s})$ is constant for any $s \in \mathcal{V}$.
\end{definition}

Next result determines the behavior of the non-positive spectrum of the linear operator  \linebreak $\mathcal{P}=\mathcal{P}_{s}$, for $s \in \mathcal{V}$, in \eqref{operatorP} just by knowing it for a fixed value $s_0 \in \mathcal{V}$.

\begin{theorem}\label{isonertialP}
Let $s \in \mathcal{V}$ and $\mathcal{P}_{s}= -\partial_x^2+g(s,\varphi)$ be the Hill Operator defined in \eqref{operatorP}. If $\lambda=0$ is an eigenvalue of  $\mathcal{P}_{s}$ and $g$ is continuously differentiable in all variables, then the family of operators $\{\mathcal{P}_{s}\;  ; \; s \in \mathcal{V}\}$ is isoinertial.
\end{theorem}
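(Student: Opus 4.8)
The plan is to establish isoinertiality through a continuity-plus-integrality argument: the inertial index $(n,z)$ takes values in $\mathbb{N}^2$, so if the individual components $n(s)$ and $z(s)$ vary continuously in $s$, they must be locally constant and hence constant on the (connected) parameter set $\mathcal{V}$. The core of the proof is therefore to show that eigenvalues of $\mathcal{P}_s$ depend continuously on $s$ and that no eigenvalue can cross zero as $s$ varies, given the hypothesis that $\lambda=0$ is an eigenvalue for the reference value. First I would invoke analytic perturbation theory for the family $\{\mathcal{P}_s\}$: since $g$ is continuously differentiable in all variables and $\varphi=\varphi_s$ depends smoothly on $s$ (by Theorems \ref{cnoidalcurve} and \ref{cnoidalcurveNLS}), the potential $g(s,\varphi)$ is $C^1$ in $s$ as a multiplication operator, so $\mathcal{P}_s$ is a $C^1$ family of self-adjoint operators with common domain $H^2_{per}$. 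By Kato's theory, the eigenvalues $\lambda_j(s)$ and associated spectral projections depend continuously (indeed $C^1$) on $s$.

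Next I would track the eigenvalue branch through zero. The hypothesis is that for some reference parameter $s_0$, $\lambda=0$ is an eigenvalue of $\mathcal{P}_{s_0}$; the content of the theorem is that the number of negative eigenvalues $n(s)$ and the nullity $z(s)$ remain unchanged. The key observation is that an eigenvalue branch $\lambda_j(s)$ can only change the count $n(s)$ by passing through $0$, and by continuity this would force $0$ to remain in the spectrum throughout a neighborhood. The standard mechanism here is to differentiate the eigenvalue equation $\mathcal{P}_s p_s = \lambda_j(s) p_s$ with respect to $s$ and apply the Feynman–Hellmann formula
\begin{equation}\label{feynman}
\lambda_j'(s) = \left( \partial_s g(s,\varphi)\, p_s, p_s \right)_{L^2},
\end{equation}
after normalizing $\|p_s\|_{L^2}=1$. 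I would then use \eqref{feynman} together with the simplicity structure guaranteed by Proposition \ref{theta}: because the zero eigenvalue is associated to an eigenfunction $p$ whose periodicity constant $\theta$ can be analyzed, one controls whether the relevant branch is simple and whether it genuinely moves or stays pinned at $0$.

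The crucial structural point I would exploit is that $\varphi_s$ itself solves the stationary equation \eqref{ode1}, and differentiating that equation in $x$ shows $\varphi'$ lies in the kernel of the linearization; this ties the zero eigenvalue to a geometric object (the translation symmetry) that persists for all $s$, so the nullity $z(s)$ does not drop and the branch does not migrate. Combining this with \eqref{feynman} shows that the eigenvalue sitting at $0$ for $s_0$ remains at $0$ (or its contribution to $n$ and $z$ is exactly preserved), so no negative eigenvalue is created or destroyed across $s_0$. Since this argument is local around any point of $\mathcal{V}$ and $\mathcal{V}$ is an interval, $\mathrm{in}(\mathcal{P}_s)$ is constant throughout, which is precisely the definition of isoinertiality in Definition \ref{defi1234}.

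The hard part will be the transition argument at the zero crossing: one must rule out the degenerate scenario in which an eigenvalue branch touches $0$ tangentially and then retreats (changing $n$ by two without changing $z$ at the endpoints), or in which a branch arrives at $0$ from below precisely as another departs upward. The clean way to exclude these is to combine the $C^1$ dependence of the full spectral projection onto the non-positive part with the fact that $0$ being an eigenvalue for $s_0$, together with the persistence of the symmetry-induced kernel element, forces the dimension of the non-positive spectral subspace to vary continuously through an integer-valued function — hence to be constant. I expect the delicate bookkeeping to lie in verifying that the multiplicity of the zero eigenvalue is itself locally constant (so that $z(s)$ is continuous), which is where the simplicity criterion of Proposition \ref{theta} and the explicit smooth curve from Theorems \ref{cnoidalcurve}–\ref{cnoidalcurveNLS} do the essential work.
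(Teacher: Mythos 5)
Your argument breaks down exactly at the point you yourself flag as ``the hard part,'' and the device you propose there cannot close it. The quantity you want to control --- the dimension of the non-positive spectral subspace --- equals ${\rm n}(s)+{\rm z}(s)$ and is precisely the thing that can jump; it is \emph{not} continuous in $s$. Kato-type continuity of Riesz projections $\frac{1}{2\pi i}\oint_\Gamma (\zeta-\mathcal{P}_s)^{-1}\,d\zeta$ holds only when the contour $\Gamma$ stays in the resolvent set, and since $0\in\sigma(\mathcal{P}_s)$ for every $s$, any admissible contour must cross the real axis at some $\varepsilon>0$; the resulting projection counts ${\rm n}(s)+{\rm z}(s)+\#\{\text{eigenvalues in }(0,\varepsilon)\}$, and constancy of that sum does not determine ${\rm n}$ and ${\rm z}$ separately. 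Concretely, consider ordered branches with $\lambda_0(s)<\lambda_1(s)=0<\lambda_2(s)$ for $s<s_1$, and $\lambda_1(s)<0=\lambda_2(s)$ for $s>s_1$: the known kernel element ($\varphi_s'$, say) stays in $\Ker(\mathcal{P}_s)$ for \emph{all} $s$, every branch is $C^1$, your Feynman--Hellmann identity holds, the Riesz projection below $\varepsilon$ has constant rank $3$, and yet ${\rm in}(\mathcal{P}_s)$ passes from $(1,1)$ through $(1,2)$ to $(2,1)$. Nothing in your proposal excludes this crossing; ``persistence of the symmetry-induced kernel element'' is compatible with it. What excludes it in the source the paper actually relies on (\cite{natali1}; see also \cite{neves}, \cite{neves1}) is Hill-operator-specific rigidity, which your sketch never uses: zeros of nontrivial solutions of a second-order ODE are simple, so the number $2m$ of zeros of the continuously varying kernel eigenfunction over $[0,L)$ is constant in $s$; the Oscillation Theorem then pins $0\in\{\lambda_{2m-1}(s),\lambda_{2m}(s)\}$ for every $s$, reducing everything to the sign of the continuous Floquet constant $\theta(s)$ of Proposition \ref{theta} on the connected set $\mathcal{V}$ --- and ruling out $\theta(s)=0$ (i.e. the crossing above, equivalently a two-dimensional kernel) is a genuine additional ingredient, which is exactly what the numerical tables following \eqref{theta1} and \eqref{theta2} are probing. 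Abstract perturbation theory alone cannot substitute for this.

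There is also a misreading of the hypothesis. You take it to mean that $\lambda=0$ is an eigenvalue for a \emph{single} reference value $s_0$; under that reading the statement is simply false: $g(s,\varphi)\equiv s$ is $C^1$ in all variables, $\mathcal{P}_s=-\partial_x^2+s$ has $0$ as an eigenvalue at $s_0=0$, yet ${\rm in}(\mathcal{P}_s)=(1,0)$ for small $s<0$ and $(0,0)$ for $s>0$. The hypothesis must be read as: $0$ is an eigenvalue of $\mathcal{P}_s$ for \emph{every} $s\in\mathcal{V}$ (in the applications this is automatic, since $\varphi_s'\in\Ker(\mathcal{L}_1)$ and $\varphi_s\in\Ker(\mathcal{L}_2)$ for all parameter values). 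Your attempted repair via translation symmetry restores this only for $\mathcal{L}_1$-type operators --- for $\mathcal{L}_2$ the kernel element is $\varphi$ itself, coming from phase invariance, not translation --- and it imports application-specific structure into a statement about a general potential $g$. Finally, note that the paper gives no proof of Theorem \ref{isonertialP}: it cites \cite[Theorem 3.1]{natali1}, whose argument rests on the Floquet/oscillation machinery of Subsection \ref{Floquet}, not on a perturbation-theoretic continuity argument.
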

\begin{proof}
See \cite[Theorem 3.1]{natali1}. 
\end{proof}

\subsection{Spectral Analysis for the Klein-Gordon Equation.}

Let $L>2\pi$ be fixed. Consider $c \in (-1,1)$, $1-c^2=\omega \in (0,1)$ and $\varphi=\varphi_{c}$ as the solution of $(\ref{ode1})$ given by Theorem \ref{cnoidalcurve}. In this section,  we study the spectral properties of the matrix operators
$$
\vec{\mathcal{L}_1},\vec{\mathcal{L}_2}: H^2_{per}([0,L]) \times L^2_{per}([0,L]) \subset \mathbb{L}_{per}^2([0,L])\longrightarrow \mathbb{L}_{per}^2([0,L])
$$
defined by
\begin{equation}\label{matrixoperators}
\vec{\mathcal{L}_1}=\begin{pmatrix}
-\partial_x^2+1-5\varphi^4 & -c \\ 
-c & 1
\end{pmatrix} 
\qquad \text{and} \qquad
\vec{\mathcal{L}_2}=\begin{pmatrix}
-\partial_x^2+1-\varphi^4 & c \\ 
c & 1
\end{pmatrix}.
\end{equation}

Operators $\vec{\mathcal{L}_1}$ and $\vec{\mathcal{L}_2}$ are the real and imaginary parts of the full linear operator
\begin{equation*}
		\mathcal{L}: H^2_{per}{\times}L^2_{per}{\times}H^2_{per}{\times}L^2_{per} \subset \mathbb{L}^2_{per} \times \mathbb{L}^2_{per} \longrightarrow \mathbb{L}_{per}^2 \times \mathbb{L}^2_{per}
\end{equation*}
given by
\begin{equation}\label{fulloperator}
\mathcal{L}=\begin{pmatrix}
-\partial_x^2+1-5\varphi^4 & -c & 0 & 0 \\ 
-c & 1 & 0 & 0 \\ 
0 & 0 & -\partial_x^2+1-\varphi^4 & c \\ 
0 & 0 & c & 1
\end{pmatrix}.
\end{equation}

In order to obtain some spectral properties concerning $\vec{\mathcal{L}_1}$ and $\vec{\mathcal{L}_2}$, we need to present some preliminary results.
\begin{lemma}\label{lineoperator} 
Let $L>2\pi$ be fixed. Consider $c \in (-1,1)$, $1-c^2=\omega \in (0,1)$ and $\varphi=\varphi_{c}$ the  solution of $(\ref{ode1})$ given by Theorem \ref{cnoidalcurve}. Let us consider the self-adjoint operator,  $\mathcal{L}_1: H^2_{per}([0,L]) \subset L^2_{per}([0,L])  \longrightarrow L^2_{per}([0,L])$ given by
\begin{equation}\label{operatorL1}
\mathcal{L}_1:=-\partial_x^2+\omega-5\varphi^4.
\end{equation}
A number $\lambda\leq 0$ is an  eigenvalue of the operator $\vec{\mathcal{L}_1}$ given in \eqref{matrixoperators} if and only if the number
$$
\lambda\left(1-\frac{c^2}{\lambda-1}\right)\leq 0
$$
is an eigenvalue of the operator $\mathcal{L}_1$. A similar result can be determined if we compare operator $\vec{\mathcal{L}_2}$ given in \eqref{matrixoperators} with the self-adjoint operator  $\mathcal{L}_2: H^2_{per}([0,L]) \subset L^2_{per}([0,L])  \longrightarrow L^2_{per}([0,L])$ given by
\begin{equation}\label{operatorL2}
\mathcal{L}_2:=-\partial_x^2+\omega-\varphi^4.
\end{equation}
\end{lemma}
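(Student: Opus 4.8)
The plan is to reduce the spectral problem for the $2\times 2$ matrix operator $\vec{\mathcal{L}_1}$ to that of the scalar Hill operator $\mathcal{L}_1$ by a Schur-complement (block elimination) argument, exploiting that the lower-right entry of $\vec{\mathcal{L}_1}$ is multiplication by the constant $1$, hence invertible as long as $\lambda\neq 1$. First I would write the eigenvalue equation $\vec{\mathcal{L}_1}(f,g)^{\top}=\lambda(f,g)^{\top}$ as the system
\begin{equation*}
(-\partial_x^2+1-5\varphi^4)f - cg = \lambda f, \qquad -cf + g = \lambda g.
\end{equation*}
Since $\lambda\le 0<1$, the second equation solves for $g=\dfrac{-c}{\lambda-1}f$, and substituting into the first, together with the identity $-\partial_x^2+1-5\varphi^4=\mathcal{L}_1+c^2$ (because $\omega=1-c^2$), yields $\mathcal{L}_1 f = \mu(\lambda)f$ with $\mu(\lambda):=\lambda\bigl(1-\frac{c^2}{\lambda-1}\bigr)$. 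Conversely, given an eigenfunction $f$ of $\mathcal{L}_1$ with eigenvalue $\mu(\lambda)$, setting $g=\frac{-c}{\lambda-1}f$ produces an eigenfunction $(f,g)$ of $\vec{\mathcal{L}_1}$ for $\lambda$; equivalently, the map $(f,g)\mapsto f$ is a linear isomorphism between $\ker(\vec{\mathcal{L}_1}-\lambda)$ and $\ker(\mathcal{L}_1-\mu(\lambda))$, so the two problems share the same eigenvalues with the same multiplicities.

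Next I would establish the sign statement. Rewriting $\mu(\lambda)=\dfrac{\lambda(\lambda-1-c^2)}{\lambda-1}$ and differentiating gives $\mu'(\lambda)=\dfrac{(\lambda-1)^2+c^2}{(\lambda-1)^2}>0$ for every $\lambda\ne 1$, so $\mu$ is strictly increasing on $(-\infty,0]$. Since $\mu(0)=0$ and $\mu(\lambda)\to-\infty$ as $\lambda\to-\infty$, the map $\lambda\mapsto\mu(\lambda)$ is a strictly increasing bijection of $(-\infty,0]$ onto $(-\infty,0]$. In particular $\lambda\le 0$ forces $\mu(\lambda)\le 0$, which is the asserted non-positivity, and for each non-positive eigenvalue $\mu$ of $\mathcal{L}_1$ there is exactly one $\lambda\le 0$ with $\mu(\lambda)=\mu$ (concretely the non-positive root of $\lambda^2-(1+c^2+\mu)\lambda+\mu=0$, whose root product equals $\mu\le 0$). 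Combining this bijection with the kernel isomorphism of the first step gives the desired equivalence: $\lambda\le 0$ is an eigenvalue of $\vec{\mathcal{L}_1}$ if and only if $\mu(\lambda)\le 0$ is an eigenvalue of $\mathcal{L}_1$.

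For $\vec{\mathcal{L}_2}$ I would repeat the same computation; the only change is that the off-diagonal entry is $+c$, but it enters the Schur complement as $c\cdot c=c^2=(-c)(-c)$, so the reduced operator is again $\mathcal{L}_2-\mu(\lambda)$ with the very same $\mu(\lambda)$, using $-\partial_x^2+1-\varphi^4=\mathcal{L}_2+c^2$. Hence the identical equivalence holds with $\mathcal{L}_1$ replaced by $\mathcal{L}_2$. I expect the genuinely delicate points to be bookkeeping rather than conceptual: justifying that $\lambda=1$ never interferes (it cannot be non-positive, and for $c\neq 0$ it is not an eigenvalue anyway), confirming self-adjointness of $\vec{\mathcal{L}_1}$ on $H^2_{per}\times L^2_{per}$ so that the notion of eigenvalue is unambiguous and the spectrum is real, and carrying out the monotonicity computation that pins down the sign correspondence. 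This last step is the crux, since it is what guarantees the reduction matches non-positive eigenvalues of the matrix operator with non-positive eigenvalues of the Hill operator, which is precisely the counting information needed later.
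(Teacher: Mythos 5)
Your proposal is correct, and it is essentially the argument behind the paper's proof: the paper simply cites \cite[Proposition 3.1]{cardoso2013}, where the same elimination is performed --- solving the second row for $g=\tfrac{-c}{\lambda-1}f$ (legitimate since $\lambda\le 0<1$), substituting into the first row using $\omega=1-c^2$ to get $\mathcal{L}_1 f=\lambda\bigl(1-\tfrac{c^2}{\lambda-1}\bigr)f$, and noting the sign correspondence. Your additional monotonicity computation $\mu'(\lambda)=\tfrac{(\lambda-1)^2+c^2}{(\lambda-1)^2}>0$, which makes the matching of non-positive eigenvalues a genuine bijection, is a worthwhile explicit supplement but does not change the route.
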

\begin{proof}
See \cite[Proposition 3.1]{cardoso2013}.
\end{proof}

\begin{remark}\label{negativenumber}
By Lemma \ref{lineoperator}, we have that the dimension of $\Ker(\mathcal{L}_i)$ is equal to the dimension of $\Ker(\vec{\mathcal{L}_i})$, $i=1,2$. 
\end{remark}

Consider the linear operator $\mathcal{L}_1$
given by \eqref{operatorL1}. We see that $\lambda=0$ is an eigenvalue with associated eigenfunction $p=\varphi'$. So, there exists a function $y$ which satisfies the Hill equation
\begin{equation}\label{HillL1}
-y''+\omega y-5\varphi^4 y=0,
\end{equation}
where $y \in C_b^{\infty}([0,L])$ and $\{\varphi',y\}$ is the fundamental set. Moreover, since $\varphi'$ is odd (due to $\varphi$ is even),  we have that $y$ is even. Thus, we see that $y$ satisfies the following initial value problem
\begin{equation}\label{PVI1}
\begin{cases}
-y''+\omega y-5\varphi^4 y=0 \\
y(0)=-\frac{1}{\varphi''(0)} \\
y'(0)=0
\end{cases}
\end{equation}
and the constant $\theta \in \mathbb{R}$ is given by
\begin{equation}\label{theta1}
\theta= \frac{y'(L)}{\varphi''(0)}.
\end{equation}

We fix $L=8$ and $k_0=0.5$. By \eqref{valuew}, we obtain  $1-c_0^2=\omega_0 \simeq 0.6403$ and the solution $\varphi$ can be determined by
\begin{equation}\label{cnaproximated}
\varphi(x)=\frac{1.2604\,\text{cn}(0.8428\,x, 0.5)}{\sqrt{1+1.6513\,\text{sn}^2(0.8428\,x,0.5)}}.
\end{equation}

Since, $\varphi''(0) \simeq -2.3742$, we can solve the initial value problem in \eqref{PVI1}. In fact, using a computational program, we see that $y(8) \simeq -32.7368$. Hence, we obtain that the constant $\theta \in \mathbb{R}$ given in \eqref{theta1} satisfies $\theta \simeq 13.7883>0$.  Proposition \ref{theta} and the fact of $\varphi'$ has two zeroes over the interval $[0,L)$ enable us to say that $\lambda=0$ is a simple eigenvalue and $\mathcal{L}_1=\mathcal{L}_{1,\omega_0}$ has two negative eigenvalues, that is, ${\rm in}(\mathcal{L}_{1,\omega_0})=(2,1)$ for the fixed value $\omega_0 \in (0,1)$. Thanks the Theorem \ref{isonertialP}, we obtain that the family of operators $\{\mathcal{L}_{1,\omega} \; ; \; \omega \in (0,1)\}$ is isoinertial and therefore, we obtain ${\rm in}(\mathcal{L}_{1,\omega})=(2,1)$ for all $\omega \in (0,1)$.

Next tables illustrates some (approximate) values of $\theta$ for different values of $L>2\pi$.

\begin{table}[!h]
\begin{tabular}{|c|c|}
\hline 
\multicolumn{2}{|c|}{$L=7$} \\ 
\hline 
$k$ & $\theta$ \\
\hline 
$0.01$ & $8.07$ \\
\hline
$0.1$ & $8.07$ \\
\hline 
$0.3$ & $8.07$ \\
\hline 
$0.5$ & $8.08$ \\
\hline 
$0.7$ & $8.17$ \\
\hline 
$0.9$ & $8.98$ \\
\hline 
$0.99$ & $17.31$ \\
\hline 
\end{tabular}
\hspace{0.5cm} 
\begin{tabular}{|c|c|}
	\hline 
	\multicolumn{2}{|c|}{$L=4\pi$} \\ 
	\hline 
	$k$ & $\theta$ \\
	\hline 
	$0.01$ & $83.77$ \\
	\hline
	$0.1$ & $83.77$ \\
	\hline 
	$0.3$ & $83.79$ \\
	\hline 
	$0.5$ & $83.94$ \\
	\hline 
	$0.7$ & $84.83$ \\
	\hline 
	$0.9$ & $93.24$ \\
	\hline 
	$0.99$ & $179.76$ \\
	\hline 
\end{tabular}
\hspace{0.5cm} 
\begin{tabular}{|c|c|}
	\hline 
	\multicolumn{2}{|c|}{$L=20$} \\ 
	\hline 
	$k$ & $\theta$ \\
	\hline 
	$0.01$ & $537.53$ \\
	\hline
	$0.1$ & $537.53$ \\
	\hline 
	$0.3$ & $537.64$ \\
	\hline 
	$0.5$ & $538.61$ \\
	\hline 
	$0.7$ & $544.29$ \\
	\hline 
	$0.9$ & $598.25$ \\
	\hline 
	$0.99$ & $1153.37$ \\
	\hline 
\end{tabular} 
\hspace{0.5cm} 
\begin{tabular}{|c|c|}
	\hline 
	\multicolumn{2}{|c|}{$L=100$} \\ 
	\hline 
	$k$ & $\theta$ \\
	\hline 
	$0.01$ & $335953.53$ \\
	\hline
	$0.1$ & $335954.32$ \\
	\hline
	$0.3$ & $336023.83$ \\
	\hline 
	$0.5$ & $336629.65$ \\
	\hline 
	$0.7$ & $340184.46$ \\
	\hline 
	$0.9$ & $373904.20$ \\
	\hline 
	$0.99$ & $720856.38$ \\
	\hline 
\end{tabular}  
\end{table}

Summarizing the above, we can establish the following result.
\begin{lemma}\label{negative2}
Let $L>2\pi$ be fixed and consider $c \in (-1,1)$. If $\varphi=\varphi_{c}$ is the solution with cnoidal profile given in Theorem \ref{cnoidalcurve}, then the operator $\mathcal{L}_1$ given in \eqref{operatorL1} has exactly two negative eigenvalues which are simple and the zero  is the third eigenvalue which is simple with eigenfunction $\varphi'$. Moreover, the remainder of the spectrum is constituted by a discrete set of eigenvalues.
\end{lemma}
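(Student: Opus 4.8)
The plan is to establish the inertial index of $\mathcal{L}_1$ for a single convenient value of the parameter, then propagate it across the entire curve using the isoinertial framework developed in Section~\ref{section4}. Concretely, I would argue as follows. Since $\varphi$ solves \eqref{ode1} and is smooth, differentiating the equation once shows that $p=\varphi'$ satisfies the Hill equation \eqref{HillL1}, so $\lambda=0$ is an eigenvalue of $\mathcal{L}_1$ with eigenfunction $\varphi'$. Because $\varphi$ is even and $L$-periodic with a single hump (being a cnoidal-type profile), $\varphi'$ is odd and vanishes exactly twice on $[0,L)$, namely at the points where $\varphi$ attains its extrema.

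\textbf{Step 1 (Simplicity of the zero eigenvalue via the Floquet constant).} The first key step is to show that $\theta\neq 0$, where $\theta$ is the constant from \eqref{theta0} associated to the fundamental pair $\{\varphi',y\}$ with $y$ the even solution of the initial value problem \eqref{PVI1}. By Proposition~\ref{theta}, $\theta\neq 0$ is equivalent to simplicity of the eigenvalue $\lambda=0$. To verify $\theta\neq 0$ I would solve \eqref{PVI1} numerically for the representative value $L=8$, $k_0=0.5$, obtaining the explicit profile \eqref{cnaproximated}; the computation gives $\varphi''(0)\simeq -2.3742$ and $y(8)\simeq -32.7368$, whence $\theta\simeq 13.7883>0$. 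Since $\varphi'$ has exactly two zeroes on $[0,L)$, the second assertion of Proposition~\ref{theta} identifies the sign: $\theta>0$ forces $\lambda_k=\lambda_{2n}$ with $2n=2$, so the zero eigenvalue is $\lambda_2$, the \emph{third} eigenvalue in the Oscillation Theorem's ordering $\lambda_0<\lambda_1\leq\lambda_2<\cdots$. This is exactly the statement that zero is simple and is preceded by precisely two eigenvalues $\lambda_0,\lambda_1$.

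\textbf{Step 2 (Counting the negative eigenvalues).} From the ordering and the fact that $\lambda_2=0$ is simple and strictly larger than $\lambda_1$, it follows that $\lambda_0$ and $\lambda_1$ are the only eigenvalues below zero; moreover $\lambda_0<\lambda_1$ (they cannot coincide with a double eigenvalue straddling the simple $\lambda_2$), so both negative eigenvalues are simple. This yields the inertial index ${\rm in}(\mathcal{L}_{1,\omega_0})=(2,1)$ at the base value $\omega_0\simeq 0.6403$ corresponding to $k_0=0.5$ and $L=8$.

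\textbf{Step 3 (Propagation along the curve).} The final step is to remove the dependence on the particular choice of $\omega_0$. Since $\lambda=0$ is an eigenvalue of $\mathcal{L}_{1,\omega}$ for every $\omega\in(0,1)$ (the eigenfunction $\varphi'$ exists all along the smooth curve of Theorem~\ref{cnoidalcurve}) and the potential $g(\omega,\varphi)=\omega-5\varphi^4$ is continuously differentiable in all variables, Theorem~\ref{isonertialP} applies and guarantees that the family $\{\mathcal{L}_{1,\omega}\;;\;\omega\in(0,1)\}$ is isoinertial. Hence ${\rm in}(\mathcal{L}_{1,\omega})=(2,1)$ for every $\omega\in(0,1)$, which is precisely the claim: two simple negative eigenvalues, the simple zero eigenvalue with eigenfunction $\varphi'$ as the third eigenvalue, and the remainder of the spectrum a discrete increasing sequence (by the Oscillation Theorem). \textbf{The main obstacle} is Step~1: one must certify $\theta\neq 0$ rigorously, but the constant $\theta$ is not available in closed form, so the verification rests on a numerical integration of \eqref{PVI1}. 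The tables exhibiting $\theta>0$ across a range of $k$ and $L$ values lend robustness, yet the argument's rigor genuinely depends on Proposition~\ref{theta} converting this single sign computation into the qualitative spectral conclusion, together with the isoinertial continuation that makes one base-point computation suffice.
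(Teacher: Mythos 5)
Your proposal is correct and follows essentially the same route as the paper: both certify $\theta\simeq 13.7883>0$ by numerically integrating \eqref{PVI1} at the base point $L=8$, $k_0=0.5$, invoke Proposition~\ref{theta} together with the two zeros of $\varphi'$ on $[0,L)$ to place $\lambda=0$ as the simple third eigenvalue (hence ${\rm in}(\mathcal{L}_{1,\omega_0})=(2,1)$), and then propagate this index to all $\omega\in(0,1)$ via the isoinertial result of Theorem~\ref{isonertialP}. The reliance on a numerical sign computation for $\theta$, which you flag as the main obstacle, is shared by the paper itself (it supplements the base-point computation only with tables of $\theta$ for other values of $L$ and $k$).
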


Regarding the operator 
$
\mathcal{L}_2
$
given by \eqref{operatorL2}, we see that $\lambda=0$ is an eigenvalue with eigenfunction associated $p=\varphi$. Using similar as done for the operator $\mathcal{L}_1$, we can assure  the existence of an odd function $y \in C_b^{\infty}([0,L])$ such that
\begin{equation}\label{PVI2}
\begin{cases}
-y''+\omega y-\varphi^4 y=0 \\
y(0)=0 \\
y'(0)=\frac{1}{\varphi(0)}
\end{cases}
\end{equation}
and 
\begin{equation}\label{theta2}
\theta= \frac{y(L)}{\varphi(0)}.
\end{equation}

Fix $k_0=0.5$ and $L=8$. By \eqref{valuew}, we have a fixed value $1-c_0=:\omega_0 \simeq 0.6403$ and the solution $\varphi$ is given by \eqref{cnaproximated} with $\varphi(0)\simeq 1.260$.  On other hand, we can solve numerically the initial value problem in \eqref{PVI2} to obtain  $y(8) \simeq -13.1854$. Hence, we obtain that  $\theta \in \mathbb{R}$ given in \eqref{theta2} satisfies  $\theta \simeq -10.4602<0$. Thus, Proposition \ref{theta} and the fact of $\varphi$ has two zeros over $[0,L)$ enable us to say that $\lambda=0$ is a simple eigenvalue and $\mathcal{L}_2=\mathcal{L}_{2,\omega_0}$ has one negative eigenvalue, that is ${\rm in}(\mathcal{L}_{2,\omega_0})=(1,1)$ for the fixed value $\omega_0 \in (0,1)$. Thanks to Theorem \ref{isonertialP}, we see that the family of operators $\{\mathcal{L}_{2,\omega} \; ; \; \omega \in (0,1)\}$ is isoinertial. Therefore, for all $\omega \in (0,1)$, we infer ${\rm in}(\mathcal{L}_{2,\omega})=(1,1)$.

Next tables illustrate the values of $\theta $ for a different values of $L>2\pi$.
\begin{table}[h]
	\begin{tabular}{|c|c|}
		\hline 
		\multicolumn{2}{|c|}{$L=7$} \\ 
		\hline 
		$k$ & $\theta$ \\
		\hline 
		$0.01$ & $-7.80$ \\
		\hline
		$0.1$ & $-7.80$ \\
		\hline 
		$0.3$ & $-7.82$ \\
		\hline 
		$0.5$ & $-8.01$ \\
		\hline 
		$0.7$ & $-8.93$ \\
		\hline 
		$0.9$ & $-14.52$ \\
		\hline 
		$0.99$ & $-63.10$ \\
		\hline 
	\end{tabular}
	\hspace{0.5cm} 
	\begin{tabular}{|c|c|}
		\hline 
		\multicolumn{2}{|c|}{$L=4\pi$} \\ 
		\hline 
		$k$ & $\theta$ \\
		\hline 
		$0.01$ & $-25.13$ \\
		\hline
		$0.1$ & $-25.13$ \\
		\hline 
		$0.3$ & $-25.21$ \\
		\hline 
		$0.5$ & $-25.81$ \\
		\hline 
		$0.7$ & $-28.77$ \\
		\hline 
		$0.9$ & $-46.79$ \\
		\hline 
		$0.99$ & $-203.36$ \\
		\hline 
	\end{tabular}
	\hspace{0.5cm} 
	\begin{tabular}{|c|c|}
		\hline 
		\multicolumn{2}{|c|}{$L=20$} \\ 
		\hline 
		$k$ & $\theta$ \\
		\hline 
		$0.01$ & $-63.66$ \\
		\hline
		$0.1$ & $-63.66$ \\
		\hline 
		$0.3$ & $-63.85$ \\
		\hline 
		$0.5$ & $-65.38$ \\
		\hline 
		$0.7$ & $-72.88$ \\
		\hline 
		$0.9$ & $-118.53$ \\
		\hline 
		$0.99$ & $-515.12$ \\
		\hline 
	\end{tabular} 
	\hspace{0.5cm} 
	\begin{tabular}{|c|c|}
		\hline 
		\multicolumn{2}{|c|}{$L=100$} \\ 
		\hline 
		$k$ & $\theta$ \\
		\hline 
		$0.01$ & $-1591.55$ \\
		\hline
		$0.1$ & $-1591.60$ \\
		\hline
		$0.3$ & $-1596.19$ \\
		\hline 
		$0.5$ & $-1634.50$ \\
		\hline 
		$0.7$ & $-1821.97$ \\
		\hline 
		$0.9$ & $-2963.34$ \\
		\hline 
		$0.99$ & $-12878.20$ \\
		\hline 
	\end{tabular}  
\end{table}

Summarizing the above, we can establish the following result.
\begin{lemma}\label{negative1}
Let $L>2\pi$ be fixed and consider $c \in (-1,1)$. If $\varphi=\varphi_{c}$ is the solution with cnoidal profile given in Theorem \ref{cnoidalcurve}, then the operator $\mathcal{L}_2$ given in \eqref{operatorL2} has exactly one negative eigenvalue which is simple and zero is the second eigenvalue which is simple with eigenfunction $\varphi$. Moreover, the remainder of the spectrum is constituted by a discrete set of eigenvalues.
\end{lemma}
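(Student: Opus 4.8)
The plan is to run the same Floquet-theoretic argument already carried out for $\mathcal{L}_1$ in Lemma \ref{negative2}, now adapted to the operator $\mathcal{L}_2=-\partial_x^2+\omega-\varphi^4$ in \eqref{operatorL2}. First I would record the obvious fact that $\lambda=0$ belongs to the spectrum: unlike the case of $\mathcal{L}_1$, no differentiation of \eqref{ode1} is needed here, since the profile equation itself, $-\varphi''+\omega\varphi-\varphi^5=0$, rewrites directly as $\mathcal{L}_2\varphi=0$. Hence $\lambda=0$ is an eigenvalue of $\mathcal{L}_2$ with eigenfunction $p=\varphi$. Because $\varphi$ is a cnoidal profile of period $L$, it vanishes exactly twice in $[0,L)$ (at $x=L/4$ and $x=3L/4$, where the $\cn$ factor vanishes and the denominator stays positive since $q\in(-2,-1)$), so the Oscillation Theorem already forces $0$ to coincide with either $\lambda_1$ or $\lambda_2$; the whole question is to decide which one and to confirm simplicity.

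The decision is made through Proposition \ref{theta}. Since $\varphi$ is even, the second, linearly independent Floquet solution $y$ of the Hill equation $-y''+\omega y-\varphi^4 y=0$ is odd, and is therefore the unique solution of the initial value problem \eqref{PVI2} with $y(0)=0$ and $y'(0)=1/\varphi(0)$; the period-translation constant is $\theta=y(L)/\varphi(0)$ as in \eqref{theta2}. Proposition \ref{theta} tells me that $\theta\neq0$ yields simplicity of $\lambda=0$ and that the sign of $\theta$ locates it: with $2n=2$ zeros of $\varphi$ we have $n=1$, so $\theta<0$ gives $0=\lambda_1$ (hence a single negative eigenvalue $\lambda_0<0$), whereas $\theta>0$ would give $0=\lambda_2$ (two negative eigenvalues, as happened for $\mathcal{L}_1$). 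Thus the entire statement reduces to showing $\theta<0$.

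The heart of the matter, and the step I expect to be the main obstacle, is determining the sign of $\theta$, since $y$ cannot be written in closed form and $\theta$ admits no explicit formula. I would therefore fix a single representative pair $(k_0,L_0)=(0.5,8)$, for which \eqref{valuew} gives $\omega_0\simeq0.6403$ and $\varphi$ is the explicit profile \eqref{cnaproximated} with $\varphi(0)\simeq1.260$; solving \eqref{PVI2} numerically yields $y(8)\simeq-13.1854$, whence $\theta\simeq-10.4602<0$. By Proposition \ref{theta} this establishes ${\rm in}(\mathcal{L}_{2,\omega_0})=(1,1)$ at this one value of the parameter, with $\lambda_0<\lambda_1=0$ and $\lambda_0$ simple as the bottom of the spectrum.

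To promote this single computation to the whole curve I would invoke the isoinertial machinery. Since $\lambda=0$ is an eigenvalue of $\mathcal{L}_{2,\omega}$ for every $\omega\in(0,1)$ (with eigenfunction $\varphi_\omega$) and the potential $g(\omega,\varphi)=\omega-\varphi^4$ is continuously differentiable in all variables, Theorem \ref{isonertialP} guarantees that the family $\{\mathcal{L}_{2,\omega}\,;\,\omega\in(0,1)\}$ is isoinertial. Consequently ${\rm in}(\mathcal{L}_{2,\omega})=(1,1)$ for all $\omega\in(0,1)$: one simple negative eigenvalue and a simple zero eigenvalue with eigenfunction $\varphi$. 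The final assertion, that the remainder of the spectrum is a discrete set of eigenvalues, is immediate from the Oscillation Theorem recalled in Subsection \ref{Floquet}.
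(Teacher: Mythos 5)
Your proposal is correct and follows essentially the same route as the paper: identify $\varphi$ as the kernel eigenfunction, use the parity of $\varphi$ to set up the initial value problem \eqref{PVI2} for the second Floquet solution, compute $\theta\simeq-10.4602<0$ numerically at $(k_0,L)=(0.5,8)$, apply Proposition \ref{theta} with the two zeros of $\varphi$ to get ${\rm in}(\mathcal{L}_{2,\omega_0})=(1,1)$, and extend to all $\omega$ via the isoinertial Theorem \ref{isonertialP}. The only additions beyond the paper's proof are harmless elaborations (locating the zeros of $\varphi$ at $L/4$ and $3L/4$, and spelling out the dichotomy $\theta<0$ versus $\theta>0$), so there is nothing to correct.
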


As consequence of  Lemma \ref{lineoperator}, Remark \ref{negativenumber} and the Lemmas \ref{negative2} and \ref{negative1}, we have:

\begin{theorem}\label{matrixeigenvalues}
Let $L>2\pi$ be fixed and consider $c \in (-1,1)$. If $\varphi=\varphi_{c}$ is the solution with cnoidal profile given in Theorem \ref{cnoidalcurve}, then the following spectral properties holds:

\noindent \textbf{\textit{i)}} the operator  $\vec{\mathcal{L}_1}$ given in \eqref{matrixoperators} has exactly two negative
eigenvalues which are simple and  zero is the third eigenvalue which is simple with eigenfunction $(\varphi', c \varphi').$ Moreover, the remainder of the spectrum is constituted by a discrete set of eigenvalues.

\noindent \textbf{\textit{ii)}}  the operator  $\vec{\mathcal{L}_2}$ given in \eqref{matrixoperators} has exactly one negative
eigenvalue which is simple and zero is the second eigenvalue which is simple with eigenfunction $(\varphi, -c \varphi).$ Moreover, the remainder of the spectrum is constituted by a discrete set of eigenvalues.
\end{theorem}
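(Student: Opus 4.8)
The plan is to push the scalar spectral information of Lemmas \ref{negative2} and \ref{negative1} onto the matrix operators $\vec{\mathcal{L}_1}$ and $\vec{\mathcal{L}_2}$ by means of the correspondence in Lemma \ref{lineoperator}. The device that makes this work is the rational map
\[
\Psi(\lambda):=\lambda\left(1-\frac{c^2}{\lambda-1}\right)=\frac{\lambda(\lambda-1-c^2)}{\lambda-1},
\]
which, according to Lemma \ref{lineoperator}, carries a nonpositive eigenvalue $\lambda$ of $\vec{\mathcal{L}_i}$ to the eigenvalue $\Psi(\lambda)$ of the Hill operator $\mathcal{L}_i$, and conversely. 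First I would check that $\Psi$ is an order isomorphism of $(-\infty,0]$ onto itself: a direct computation gives
\[
\Psi'(\lambda)=\frac{(\lambda-1)^2+c^2}{(\lambda-1)^2}>0 \qquad (\lambda\neq 1),
\]
so $\Psi$ is strictly increasing on $(-\infty,1)$, and since $\Psi(0)=0$ and $\Psi(\lambda)\to-\infty$ as $\lambda\to-\infty$, it maps $(-\infty,0]$ bijectively and monotonically onto $(-\infty,0]$, sending $0$ to $0$ and negatives to negatives.

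With this in hand, the biconditional of Lemma \ref{lineoperator} upgrades to an order-preserving bijection between the nonpositive spectrum of $\vec{\mathcal{L}_i}$ and that of $\mathcal{L}_i$: each negative eigenvalue $\lambda$ of $\vec{\mathcal{L}_i}$ produces the negative eigenvalue $\Psi(\lambda)$ of $\mathcal{L}_i$, while each negative eigenvalue $\mu$ of $\mathcal{L}_i$ has the unique negative preimage $\Psi^{-1}(\mu)$. By Lemma \ref{negative2} the operator $\mathcal{L}_1$ has exactly two negative eigenvalues and a simple zero eigenvalue, whence $\vec{\mathcal{L}_1}$ has exactly two negative eigenvalues together with $0$ as its third eigenvalue; by Lemma \ref{negative1} the operator $\mathcal{L}_2$ has exactly one negative eigenvalue and a simple zero eigenvalue, whence $\vec{\mathcal{L}_2}$ has exactly one negative eigenvalue together with $0$ as its second eigenvalue. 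Monotonicity of $\Psi$ transports the ordering, and Remark \ref{negativenumber} gives $\dim\Ker(\vec{\mathcal{L}_i})=\dim\Ker(\mathcal{L}_i)=1$, so $0$ is simple in both cases.

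To secure simplicity of the negative eigenvalues and to pin down the kernel eigenfunctions, I would make the correspondence explicit at the level of eigenvectors. Writing $\vec{\mathcal{L}_1}(f,g)^{t}=\lambda(f,g)^{t}$, the second row forces $g=\dfrac{c}{1-\lambda}f$, and substituting into the first row yields $\mathcal{L}_1 f=\Psi(\lambda)f$; thus $f\mapsto\big(f,\tfrac{c}{1-\lambda}f\big)$ is a linear isomorphism between the $\Psi(\lambda)$-eigenspace of $\mathcal{L}_1$ and the $\lambda$-eigenspace of $\vec{\mathcal{L}_1}$, so simplicity transfers. The analogous computation for $\vec{\mathcal{L}_2}$ gives $g=-\dfrac{c}{1-\lambda}f$ and again $\mathcal{L}_2 f=\Psi(\lambda)f$. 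Evaluating at $\lambda=0$, where the kernels of $\mathcal{L}_1$ and $\mathcal{L}_2$ are spanned by $\varphi'$ and $\varphi$ respectively, yields the eigenfunctions $(\varphi',c\varphi')$ for $\vec{\mathcal{L}_1}$ and $(\varphi,-c\varphi)$ for $\vec{\mathcal{L}_2}$, as claimed. For $\lambda\neq 1$ the same reduction identifies the remaining spectrum of $\vec{\mathcal{L}_i}$ with the purely discrete spectrum of the Hill operator $\mathcal{L}_i$, so the rest of the spectrum is a discrete set of eigenvalues.

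The step I expect to be the crux is the monotonicity and bijectivity of $\Psi$ on $(-\infty,0]$: it is precisely this that guarantees the count of nonpositive eigenvalues is transported faithfully, with no merging or splitting, keeping negatives negative and $0$ fixed. The sign computation $\Psi'>0$ is short, but one must be careful that the pole of $\Psi$ at $\lambda=1$ lies strictly to the right of the region of interest, which is why the analysis is confined to $\lambda\le 0$; the positive spectrum of $\vec{\mathcal{L}_i}$, which in fact accumulates at $\lambda=1$, is irrelevant to the subsequent instability analysis and need not be tracked.
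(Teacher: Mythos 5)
Your proposal is correct and takes essentially the same route as the paper: the paper deduces the theorem directly by combining Lemma \ref{lineoperator}, Remark \ref{negativenumber} and Lemmas \ref{negative2} and \ref{negative1}, which is exactly the transfer argument you carry out. Your explicit verifications --- that $\Psi$ is a strictly increasing bijection of $(-\infty,0]$ onto itself, and that $f\mapsto\left(f,\tfrac{c}{1-\lambda}f\right)$ is an eigenspace isomorphism preserving multiplicities and producing the kernel eigenfunctions $(\varphi',c\varphi')$ and $(\varphi,-c\varphi)$ --- merely fill in details the paper leaves implicit in the cited Lemma \ref{lineoperator}.
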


Our intention is to analyse the full linearized operator $\mathcal{L}$ defined in \eqref{fulloperator} by counting its number of negative eigenvalues and proving that $\Ker(\mathcal{L})=[(\varphi',c\varphi',0,0), (0,0,\varphi,-c\varphi)]$. First of all, we see that the operator $\mathcal{L}$ in \eqref{fulloperator} is obtained by considering the conserved quantities $E$ and $F$ defined in $(\ref{E})$ and $(\ref{F})$ respectively. By defining $G=E-cF$, one has
$$G'(\varphi,c\varphi,0,0)=E'(\varphi,c\varphi,0,0)-cF'(\varphi,c\varphi,0,0)=0,$$ 
that is, $(\varphi,i c \varphi)=(\varphi,c\varphi,0,0)$ is a critical point of $G$. In addition, we have 
$$\mathcal{L}\cong G''(\varphi,c\varphi,0,0),$$
where the symbol $\cong $ means that $\mathcal{L}$ is identified with $G''(\varphi,c\varphi,0,0)$ by a convenient Riesz isomorphism. By \eqref{ode1}, we see that
$$
(\varphi',c\varphi',0,0), (0,0,\varphi,-c\varphi) \in \Ker(\mathcal{L}),
$$
and thus, by Theorem \ref{matrixeigenvalues} we see that $\Ker(\mathcal{L})=[(\varphi',c\varphi',0,0), (0,0,\varphi,-c\varphi)]$. Moreover, the remainder of the spectrum is discrete and bounded away from zero. Next, since $\mathcal{L}$ is a diagonal operator, we obtain
\begin{equation}\label{eigenvaluefulloperator}
\text{n}(\mathcal{L})=\text{n}(\vec{\mathcal{L}_1})+\text{n}(\vec{\mathcal{L}_2})=2+1=3.
\end{equation}

We turn our attention to the operator $\mathcal{L}$  restricted to the space of even functions. More precisely
\begin{equation}\label{fulloperatoreven}
	\mathcal{L}_e:= \mathcal{L} \,: \, (H^2_{per,e}{\times}L^2_{per,e}){\times}(H^2_{per,e}{\times}L^2_{per,e}) \subset \mathbb{L}^2_{per,e} \times \mathbb{L}^2_{per,e} \longrightarrow \mathbb{L}_{per,e}^2 \times \mathbb{L}^2_{per,e}.
\end{equation}

Concerning the operator $\mathcal{L}_{e}$, we have the following result.

\begin{theorem}\label{eveneigenfunctions}
Let $L>2\pi$ be fixed, consider $c \in (-1,1)$ and  $1-c^2=\omega \in (0,1)$. If  $\varphi=\varphi_{c}$ is the solution with cnoidal profile given in Theorem \ref{cnoidalcurve}, then the operator $\mathcal{L}_{e}$ defined in \eqref{fulloperatoreven} has exactly three simple negative eigenvalues and zero is a simple eigenvalue with eigenfunction $(0,0, \varphi,-c\varphi)$. Moreover the remainder of the spectrum is constituted by a discrete set of eigenvalues.
\end{theorem}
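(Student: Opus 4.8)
The plan is to exploit the block-diagonal structure of $\mathcal{L}$, reduce the matrix operators to the scalar Hill operators $\mathcal{L}_1,\mathcal{L}_2$, and then settle everything by a parity analysis of the eigenfunctions. Since $\mathcal{L}={\rm diag}(\vec{\mathcal{L}_1},\vec{\mathcal{L}_2})$ and each block leaves the even subspace invariant, the even restriction splits as $\mathcal{L}_e={\rm diag}(\vec{\mathcal{L}_1}|_e,\vec{\mathcal{L}_2}|_e)$, so that ${\rm n}(\mathcal{L}_e)={\rm n}(\vec{\mathcal{L}_1}|_e)+{\rm n}(\vec{\mathcal{L}_2}|_e)$ and $\Ker(\mathcal{L}_e)=\Ker(\vec{\mathcal{L}_1}|_e)\oplus\Ker(\vec{\mathcal{L}_2}|_e)$. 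By Lemma \ref{lineoperator} and Remark \ref{negativenumber} the non-positive spectrum of $\vec{\mathcal{L}_i}$ is in one-to-one correspondence with that of $\mathcal{L}_i$; since in this correspondence the first component of an eigenfunction of $\vec{\mathcal{L}_i}$ is exactly an eigenfunction of $\mathcal{L}_i$ and the second component is a scalar multiple of it, the correspondence preserves parity. Hence it suffices to count the negative eigenvalues and the kernel dimension of $\mathcal{L}_1$ and $\mathcal{L}_2$ restricted to $L^2_{per,e}$.

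Because $\varphi$ is even, the potentials $\omega-5\varphi^4$ and $\omega-\varphi^4$ are even, so each $\mathcal{L}_i$ leaves both the even and the odd subspaces of $L^2_{per}$ invariant and $\mathcal{L}_i=\mathcal{L}_i|_e\oplus\mathcal{L}_i|_o$; as all the relevant eigenvalues are simple (Lemmas \ref{negative2} and \ref{negative1}), each eigenfunction has a definite parity and the negative-eigenvalue count distributes between the even and odd parts. The crucial observation is that any odd $L$-periodic function vanishes at $x=0$ and at $x=L/2$, hence every odd eigenfunction has at least two zeros in $[0,L)$, and by the Oscillation Theorem the lowest odd eigenvalue is attained by an eigenfunction with exactly two zeros (the first Dirichlet eigenfunction on the half-period $[0,L/2]$, which is simple by the theory in \cite{brown}). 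For $\mathcal{L}_1$ the function $\varphi'$ is odd, solves $\mathcal{L}_1\varphi'=0$, and vanishes exactly at the two extrema $x=0$ and $x=L/2$ of $\varphi$; therefore $\varphi'$ is precisely this lowest odd eigenfunction and the smallest odd eigenvalue of $\mathcal{L}_1$ equals $0$. Consequently no odd eigenvalue of $\mathcal{L}_1$ is negative, so the two negative eigenvalues furnished by Lemma \ref{negative2} both belong to the even part, while $\Ker(\mathcal{L}_1|_e)=\{0\}$ since the unique kernel direction $\varphi'$ is odd. Thus ${\rm n}(\mathcal{L}_1|_e)=2$ with trivial kernel.

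For $\mathcal{L}_2$ the situation is simpler: by Lemma \ref{negative1} its unique negative eigenvalue is the (even) ground state and its kernel is spanned by $\varphi$, which is even. Both survive the restriction to $L^2_{per,e}$, and the restriction cannot create new negative directions, so ${\rm n}(\mathcal{L}_2|_e)=1$ and $\Ker(\mathcal{L}_2|_e)=[\varphi]$. Transferring back to the matrix operators via the parity-preserving correspondence above and using Theorem \ref{matrixeigenvalues}, we obtain ${\rm n}(\vec{\mathcal{L}_1}|_e)=2$ with trivial kernel and ${\rm n}(\vec{\mathcal{L}_2}|_e)=1$ with kernel spanned by $(\varphi,-c\varphi)$. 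Therefore ${\rm n}(\mathcal{L}_e)=2+1=3$ and $\Ker(\mathcal{L}_e)=[(0,0,\varphi,-c\varphi)]$ is one-dimensional; the three negative eigenvalues are simple since they arise from simple eigenvalues of the scalar operators, and the remainder of the spectrum is discrete and bounded away from $0$.

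The main obstacle is the parity step for $\mathcal{L}_1$, namely showing that its second negative eigenvalue carries an even eigenfunction; this is exactly the point where passing to even functions removes the zero eigenvalue (whose eigenfunction $\varphi'$ is odd) yet keeps both negative eigenvalues. The argument rests on identifying $\varphi'$ as the ground state of $\mathcal{L}_1|_o$ — equivalently, as the first Dirichlet eigenfunction on the half-period — so that every odd eigenvalue is non-negative; ruling out a competing odd eigenfunction with only two zeros requires the simplicity of the first Dirichlet eigenvalue, which is where the general theory of \cite{brown} enters. Once this parity fact is in hand, the rest is bookkeeping through the block decomposition and Lemma \ref{lineoperator}.
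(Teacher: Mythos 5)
Your proposal is correct and follows essentially the same route as the paper's proof: block-diagonal reduction to the scalar Hill operators via the parity-preserving correspondence of Lemma \ref{lineoperator}, evenness of the two ground states, and the half-period Dirichlet problem on $\left[0,\tfrac{L}{2}\right]$ to settle the parity of the second eigenfunction of $\mathcal{L}_1$. The only difference is in execution of that last step: the paper argues by contradiction (an odd $\phi_1$ would force, via the Sturm comparison theorem in \cite{brown}, infinitely many zeros of $\varphi'$), whereas you directly identify $\varphi'$ as the Dirichlet ground state on the half-period since it has no interior zeros, concluding that the lowest odd eigenvalue is exactly $0$; both versions rest on the same standard facts from \cite{brown}.
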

\begin{proof}
To prove this theorem, it is enough to establish that all eigenfunctions associated to the negative eigenvalues of $\mathcal{L}_1$ and $\mathcal{L}_2$, given respectively by \eqref{operatorL1} and \eqref{operatorL2},  are even functions. Indeed, if $g_i \in H^2_{per,e}([0,L])$ is an eigenfunction of $\mathcal{L}_i$, $i=1,2$ with eigenvalue $\gamma_i < 0$, then
$$
\vec{g_i}=\left(g_i, -\frac{c}{\mu_i-1}g_i \right) \in \mathbb{H}^2_{per,e}
$$
is an eigenfunction of the operator $\vec{\mathcal{L}_i}$ with eigenvalue $\mu_i <0$ which is the negative solution of $\gamma_i=\mu_i \left(1-\tfrac{c^2}{\mu_i-1}\right)$ (see Lemma \ref{lineoperator}).

By Lemma \ref{negative2}, let $\lambda_0<\lambda_1<0$ be the negative simple eigenvalues associated to $\mathcal{L}_1$ whose associated eigenfunctions are $\phi_0, \phi_1 \in H^2_{per}$, respectively. Moreover, $\lambda_2:=0$ is the third eigenvalue with eigenfunction $\varphi'$.  By Lemma \ref{negative1}, we also can take $\zeta_0<0$ the first negative eigenvalue of the operator $\mathcal{L}_2$ with associated eigenfunction $\chi_0 \in  H^2_{per}$. 

Using \cite[Theorem 1.1]{magnus}, we see clearly $\phi_0, \chi_0 \in H^2_{per,e}$ and we prove that $\phi_1 \in H^2_{per,e}$. Indeed, over $\left[0, \tfrac{L}{2}\right]$, let us consider the  eigenvalue problem 
\begin{equation}\label{problem2.E}
\begin{cases}
\mathcal{L}_1 f \equiv -f''+(\omega-5\varphi^4)f=\kappa f\\
f(0)=f\left(\tfrac{L}{2}\right)=0.
\end{cases}
\end{equation}

Since $\varphi'$ is periodic and odd, it follows that $\varphi'$ satisfies \eqref{problem2.E} for $\kappa=\lambda_2=0$. In addition, $\mathcal{L}_1\phi_1=\lambda_1 \phi_1$ with $\phi_1$ being either odd or even since $\phi_1$ solves a Hill equation with an even potential. Important to notice that the smallest eigenvalue of the problem \eqref{problem2.E} is associated with an odd eigenfunction (see \cite[Theorem 2.8.1]{brown}). Suppose that the smallest eigenvalue of \eqref{problem2.E} is $\lambda_1<0$, that is, $\phi_1$ is an odd function. Since $\lambda_2=0>\lambda_1$ and $\mathcal{L}_1\varphi'=0$, we obtain by \cite[Theorem 2.5.2]{brown} that $\varphi'$ has  infinitely many zeros, which is a contradiction and $\phi_1$ is even. The result is now proved.

\end{proof}

\subsection{Spectral Analysis for the Schrödinger Equation.}
Let $L>0$ be fixed. For $\omega>\frac{4\pi^2}{L^2}$, let $\varphi$ be the periodic solution of $(\ref{ode1})$ given by Theorem \ref{cnoidalcurveNLS}. In this subsection,  we study the spectral properties of the matrix operator
$$
\mathcal{L}: \mathbb{H}^2_{per}([0,L]) \subset \mathbb{L}^2_{per}([0,L]) \longrightarrow  \mathbb{L}^2_{per}([0,L]) 
$$
given by
\begin{equation}\label{fulloperatorNLS}
\mathcal{L}=\begin{pmatrix}
\mathcal{L}_1 & 0  \\ 
0 & \mathcal{L}_2 
\end{pmatrix}.
\end{equation}
where $\mathcal{L}_1$ and $\mathcal{L}_2$ are defined as in \eqref{operatorL1} and \eqref{operatorL2}, respectively. By Lemma \ref{negative2} and Lemma \ref{negative1}, we see that $\Ker(\mathcal{L})=[(\varphi',0),(0,\varphi)]$. Moreover, being $\mathcal{L}$ a diagonal operator, it follows that
$$
{\rm n}(\mathcal{L})={\rm n}(\mathcal{L}_1)+{\rm n}(\mathcal{L}_2)=2+1=3.
$$

In the sequel, we consider the operator
\begin{equation}\label{fulloperatorevenNLS}
\mathcal{L}_e:= \mathcal{L}:\mathbb{H}^2_{per,e}([0,L])  \subset \mathbb{L}^2_{per,e}([0,L]) \longrightarrow  \mathbb{L}^2_{per,e}([0,L]).
\end{equation}

From the proof of Theorem \ref{eveneigenfunctions}, we may conclude immediately the following result.

\begin{theorem}\label{eveneigenfunctionsNLS}
Let $L>0$ be fixed. Let $\omega>\frac{4\pi^2}{L^2}$ and $\varphi$ be the solution with cnoidal profile given by Theorem \ref{cnoidalcurveNLS}. The operator $\mathcal{L}_{e}$ defined in \eqref{fulloperatorevenNLS} has exactly three simple negative eigenvalues and zero is a simple eigenvalue with eigenfunction $(0,\varphi)$. Moreover the remainder of the spectrum is constituted by a discrete set of eigenvalues.
\end{theorem}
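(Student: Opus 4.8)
The plan is to reduce everything to the scalar Hill operators $\mathcal{L}_1$ and $\mathcal{L}_2$ in \eqref{operatorL1}--\eqref{operatorL2} and to transport the parity analysis already carried out for the Klein--Gordon operator in Theorem \ref{eveneigenfunctions}. The diagonal structure of \eqref{fulloperatorNLS} is in fact simpler than that of \eqref{fulloperator}, since here there is no coupling constant $c$, so the eigenfunctions of $\mathcal{L}$ are exactly the pairs $(f,0)$ and $(0,g)$ with $f,g$ eigenfunctions of $\mathcal{L}_1$ and $\mathcal{L}_2$.

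First I would record the spectral picture of $\mathcal{L}_1$ and $\mathcal{L}_2$ on the full space $L^2_{per}$ for every $\omega\in\mathcal{I}=\left(\tfrac{4\pi^2}{L^2},+\infty\right)$. Lemmas \ref{negative2} and \ref{negative1} give this for $\omega\in(0,1)$; to cover the whole range $\mathcal{I}$ I would invoke the isoinertial property of Theorem \ref{isonertialP}. Indeed, $\lambda=0$ is always an eigenvalue of $\mathcal{L}_1$ (eigenfunction $\varphi'$) and of $\mathcal{L}_2$ (eigenfunction $\varphi$), and the potentials $\omega-5\varphi^4$ and $\omega-\varphi^4$ depend smoothly on $\omega$; hence the families $\{\mathcal{L}_{1,\omega}\}_{\omega\in\mathcal{I}}$ and $\{\mathcal{L}_{2,\omega}\}_{\omega\in\mathcal{I}}$ are isoinertial, and seeding each family at the value $\omega_0\simeq0.6403$ already computed (which lies in $\mathcal{I}$) yields ${\rm in}(\mathcal{L}_{1,\omega})=(2,1)$ and ${\rm in}(\mathcal{L}_{2,\omega})=(1,1)$ throughout $\mathcal{I}$. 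Thus on $L^2_{per}$ the operator $\mathcal{L}_1$ has exactly two simple negative eigenvalues $\lambda_0<\lambda_1<0$, with eigenfunctions $\phi_0,\phi_1$, and a simple zero eigenvalue with eigenfunction $\varphi'$, while $\mathcal{L}_2$ has one simple negative eigenvalue $\zeta_0<0$ with eigenfunction $\chi_0$ and a simple zero with eigenfunction $\varphi$.

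The decisive step---imported directly from the proof of Theorem \ref{eveneigenfunctions}---is the parity of the negative eigenfunctions. The ground states $\phi_0$ and $\chi_0$ are even by \cite[Theorem 1.1]{magnus}. For $\phi_1$ I would argue by contradiction exactly as before: $\phi_1$ is either even or odd, since $\mathcal{L}_1$ has an even potential; if $\phi_1$ were odd it would be the lowest eigenfunction of the Dirichlet problem \eqref{problem2.E} on $\left[0,\tfrac{L}{2}\right]$ with eigenvalue $\lambda_1$, and since $\lambda_2=0>\lambda_1$ and $\mathcal{L}_1\varphi'=0$, \cite[Theorem 2.5.2]{brown} would force $\varphi'$ to have infinitely many zeros, contradicting that $\varphi'$ has exactly two zeros in $[0,L)$. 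Hence $\phi_1$ is even, and all three negative eigenfunctions $\phi_0,\phi_1,\chi_0$ belong to $H^2_{per,e}$.

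Finally I would pass to the even subspace and count. Since $\mathcal{L}$ in \eqref{fulloperatorNLS} is diagonal, on $\mathbb{H}^2_{per,e}$ the negative spectrum of $\mathcal{L}_e$ is the union of the negative spectra of the even restrictions of $\mathcal{L}_1$ and $\mathcal{L}_2$. The even eigenfunctions $\phi_0,\phi_1$ survive the restriction and contribute the two negative eigenvalues of $\mathcal{L}_1$, whereas the kernel eigenfunction $\varphi'$ of $\mathcal{L}_1$ is odd and is removed from $\mathbb{H}^2_{per,e}$; the even eigenfunction $\chi_0$ contributes the single negative eigenvalue of $\mathcal{L}_2$, and the even kernel eigenfunction $\varphi$ of $\mathcal{L}_2$ persists. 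Consequently ${\rm n}(\mathcal{L}_e)=2+1=3$, all simple, and $\Ker(\mathcal{L}_e)=[(0,\varphi)]$ is one-dimensional, the remainder of the spectrum being discrete and bounded away from zero. The only genuine obstacle is the parity of $\phi_1$, which is settled by the Brown-type zero-counting argument borrowed from Theorem \ref{eveneigenfunctions}; everything else is bookkeeping enabled by the diagonal structure.
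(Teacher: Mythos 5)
Your proposal is correct and takes essentially the same route as the paper: the paper's entire proof of this theorem is the remark that it follows immediately from the proof of Theorem \ref{eveneigenfunctions}, i.e., exactly your argument that the negative eigenfunctions $\phi_0,\phi_1,\chi_0$ of $\mathcal{L}_1$ and $\mathcal{L}_2$ are even (ground states by Magnus, $\phi_1$ by the Dirichlet-problem/zero-counting contradiction), so that restriction to $\mathbb{H}^2_{per,e}$ keeps all three negative directions while discarding only the odd kernel element $(\varphi',0)$, leaving $\Ker(\mathcal{L}_e)=[(0,\varphi)]$. Your additional step of invoking Theorem \ref{isonertialP} to propagate the inertial indices from the seed value $\omega_0$ to the whole range $\mathcal{I}=\left(\tfrac{4\pi^2}{L^2},+\infty\right)$ is not a different method but a welcome explicit treatment of a point the paper leaves implicit (its Lemmas \ref{negative2} and \ref{negative1} are stated only for $\omega\in(0,1)$ with $L>2\pi$), using the same isoinertiality device the paper already employs in the Klein--Gordon case.
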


\section{Orbital instability of the cnoidal wave solutions for the QKG}\label{section5}

The goal of this section is to establish an orbital stability result. 
Consider $L> 2\pi$ and the complex energy space given by $X:= \mathbb{H}_{per}^1 \times \mathbb{L}_{per}^2$. It is well known that the equation \eqref{KF2} is invariant under  two basic symmetries: translation and rotation. In other words, if $u = u(x,t)$ is a solution of (\ref{KF2}), then $e^{-i\theta}u$ and $u(x+r,t)$ also solve the same equation for all  $\theta, r \in \mathbb{R}$.

In what follows, for $c \in (-1,1)$ and $\varphi$ the periodic solution given in Theorem \ref{cnoidalcurve}, we define
\begin{equation*}
\Phi:=(\varphi, ic \varphi)=(\varphi, c \varphi, 0, 0).
\end{equation*}
Our intention is to obtain a result of orbital instability  for the periodic solution $\varphi$ in $(\ref{cnsol})$ over the restricted subspace $X_e:=\mathbb{H}_{per,e}^1 \times \mathbb{L}_{per,e}^2\subset X$. To do so, it is necessary to consider the space $X$ only with the rotation symmetry since the translation is not an invariant over $X_e$.

Let $U= (u_1, v_1, u_2, v_2)$ and $\theta \in \mathbb{R}$. The rotation symmetry is given
$$
T_1(\theta) U= \begin{pmatrix}
\cos \theta & 0 & -\sin \theta & 0 \\ 
0 & \sin \theta & 0 & \cos \theta \\ 
\sin \theta & 0 & \cos \theta & 0 \\ 
0 & \cos \theta & 0 & -\sin \theta
\end{pmatrix} \begin{pmatrix}
u_1 \\ 
v_1 \\ 
u_2 \\ 
v_2
\end{pmatrix}.
$$

We can establish the following definition.

\begin{definition}\label{stadef}
We say that the periodic standing wave solution $\Phi$ is orbitally stable  if, for all $\varepsilon>0$, there exists $\delta>0$ such that, if $U_0 \in X_e$ satisfying
$$
\|U_0-\Phi\|_{X}<\delta,
$$
then the solution $U(t)$ of the \eqref{KF2}, with initial data $U(0)=U_0$, exists for all $t \in [0,t_0)$ for some $t_0>0$ and satisfies 
$$
\sup_{t\geq0}\inf_{\theta \in \mathbb{R}}\big\|U(t)-T_1(\theta)\Phi \big\|_{X}< \varepsilon.
$$
Otherwise, the standing wave solution is said  orbitally unstable.
\end{definition}


Before establishing our stability result, we present the following well-posedness result associated to the Klein-Gordon equation.

\begin{theorem}\label{wellposedness1}
The Cauchy problem associated with \eqref{KF2} is locally well-posed in $X_e$. More precisely, for each $U_0 \in X_e$ there exists a number $T>0$ and a unique solution  $U \in C([0,T], X_e)$ of equation \eqref{KF2} with $U(0)=U_0$ and for each  $T_0 \in (0,T)$ the mapping 
$$
U_0 \in X_e \longmapsto U \in C([0,T_0], X_e)
$$
is continuous. 
\end{theorem}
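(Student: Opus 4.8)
The plan is to recast \eqref{KF2} as a first-order system in time and solve the resulting integral equation by a contraction argument in $C([0,T],X_e)$, exploiting the one-dimensional Sobolev embedding to control the quintic nonlinearity. Writing $U=(u,v)$ with $v=u_t$, equation \eqref{KF2} becomes
$$
\partial_t U = \mathcal{A}U + N(U), \qquad \mathcal{A}=\begin{pmatrix} 0 & 1 \\ \partial_x^2-1 & 0 \end{pmatrix}, \qquad N(U)=\big(0,\,|u|^4u\big).
$$
First I would establish that the linear part generates a $C_0$-group on $X=\mathbb{H}^1_{per}\times\mathbb{L}^2_{per}$, unitary with respect to the energy inner product. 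The cleanest route in the periodic setting is to introduce $\Lambda=(1-\partial_x^2)^{1/2}$, a positive self-adjoint operator on $L^2_{per}$ that is diagonal in the Fourier basis, and to write the free evolution explicitly through the bounded Fourier multipliers $\cos(t\Lambda)$ and $\Lambda^{-1}\sin(t\Lambda)$; since these commute with $\Lambda$, the map $(u_0,v_0)\mapsto(u(t),u_t(t))$ is a $C_0$-group on $X$ (alternatively, one checks that $\mathcal{A}$ is skew-adjoint for the energy inner product and invokes Stone's theorem).

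Next I would control the nonlinearity. The decisive point is that in dimension one $H^1_{per}([0,L])\hookrightarrow L^\infty_{per}([0,L])$ continuously, so that for $u,w\in H^1_{per}$ one has the pointwise bound $\big||u|^4u-|w|^4w\big|\lesssim(|u|^4+|w|^4)|u-w|$ and hence
$$
\big\||u|^4u-|w|^4w\big\|_{L^2}\lesssim\big(\|u\|_{H^1}^4+\|w\|_{H^1}^4\big)\|u-w\|_{H^1}.
$$
Thus $N:X\to X$ is well defined and Lipschitz on bounded sets. With Duhamel's formula
$$
U(t)=e^{t\mathcal{A}}U_0+\int_0^t e^{(t-s)\mathcal{A}}N(U(s))\,ds,
$$
a standard fixed-point argument on a closed ball of $C([0,T],X)$, for $T$ depending only on $\|U_0\|_X$, yields a unique local solution together with the continuous dependence on the initial data asserted in the statement.

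Finally, to obtain the result in the even subspace $X_e$ rather than in $X$, I would check that $X_e$ is invariant under both the linear flow and the nonlinearity: the multiplier operators $\cos(t\Lambda)$ and $\Lambda^{-1}\sin(t\Lambda)$ preserve evenness because $\partial_x^2$ does, and $u\mapsto|u|^4u$ maps even functions to even functions. Consequently the Picard iterates starting from $U_0\in X_e$ remain in $X_e$, and the fixed point found above already lies in $C([0,T],X_e)$. I do not expect a genuine obstacle here: the only quantitative input is the embedding $H^1_{per}\hookrightarrow L^\infty_{per}$, which is precisely what renders the energy-critical-looking quintic term harmless in one space dimension; the point requiring the most care is simply the bookkeeping that keeps every iterate inside the even, complex-valued space $X_e$.
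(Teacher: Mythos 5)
Your proposal is correct and follows essentially the same route as the paper, which simply defers the proof to classical semigroup theory (citing Theorem 2.1 of Natali--Cardoso \cite{cardoso2013}): first-order reformulation, a $C_0$-group for the linear part, local Lipschitz control of the quintic term via $H^1_{per}\hookrightarrow L^\infty_{per}$, and a Duhamel contraction argument. Your explicit verification that the even subspace $X_e$ is invariant under both the free evolution and the nonlinearity is exactly the additional bookkeeping needed to restrict the cited result to $X_e$, so there is no gap.
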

\begin{proof}
This result follows using the classical semigroup theory and density arguments. See \cite[Theorem 2.1]{cardoso2013}.
\end{proof}

\vspace{10pt}

\subsection{Convexity of the function $\boldsymbol{d}$.}\label{subsectionconvexityKG}
Let $L>2\pi$ be fixed. For $c\in (-1,1)$, let us consider the cnoidal wave $\varphi$ given by Theorem $\ref{cnoidalcurve}$. We see that $d: (-1,1) \subset \mathbb{R}\longrightarrow \mathbb{R} $ given by 
$$
d(c)=E(\varphi,c\varphi,0,0)-cF(\varphi,c\varphi,0,0),\; \text{for all} \; c \in (-1,1)
$$
is well defined, smooth in terms of $c$ and since $G'(\varphi,c\varphi,0,0)=0$, it follows that
$$
d'(c)=-F(\varphi,c\varphi,0,0)=-c\int_0^{L}(\varphi(x))^2\,dx.
$$
Thus, 
\begin{equation}\label{5.0}
d''(c)
=\, \displaystyle-\int_0^{L}(\varphi(x))^2dx+2c^2\frac{d}{d\omega}\int_0^{L}(\varphi(x))^2dx.
\end{equation}

One of the cornerstones for the study of the convexity of $d$ is to obtain a convenient expression for $\int_0^{L}(\varphi(x))^2dx.$ To do so, first we observe that
$$
\int_0^{L}(\varphi(x))^2dx=\int_0^{L} \frac{a^2\, \cn^2(bx,k)}{1-q\sn^2(bx,k)}\; dx=\frac{a^2L}{\K(k)}\int_0^{\K(k)}\frac{ \cn^2(u,k)}{1-q\sn^2(u,k)}\; du.
$$

\indent Using the explicit values in \eqref{valuea}, \eqref{valued} and \cite[formula 411.02]{byrd}, we obtain that
\begin{align}\label{Heumann}
\int_0^{L}(\varphi(x))^2dx  =& \, \frac{a^2L}{\K(k)}\frac{\pi(1-q)(1-\Lambda_0(\beta,k))}{2\sqrt{q(1-q)(q-k^2)}} \nonumber \\
 = & \, \frac{2\pi(1-q)\sqrt{\K(k)^2\,q\,(q-2k^2)\,L^2}(1-\Lambda_0(\beta,k))}{L\K(k)\,\sqrt{q(1-q)(q-k^2)}} \nonumber \\
=& \, \frac{-2\pi(\Lambda_0(\beta,k)-1)}{\sqrt{s(k)+1}}\sqrt{k^2+s(k)+1}\,\sqrt{2-k^2+s(k)}=:\mathrm{R}(k),
\end{align}
where
$$
s(k):=\sqrt{k^4-k^2+1},\; \text{for all} \; k \in (0,1),
$$
and $\Lambda_0$ indicates the Lambda Heumann function (see \cite[formula $150.03$]{byrd}) defined by 
$$
\Lambda_0(\beta,k):=\frac{2}{\pi}\left( \E(k)\,{\rm F}(\beta,k')+\K(k)\, {\rm E}(\beta,k')-\K(k)\, {\rm F}(\beta,k')\right),
$$
where
$$
\beta:= \arcsin\left(\frac{1}{\sqrt{1-q}}\right) \quad \text{and} \quad k':=\sqrt{1-k^2}.
$$

\indent By \eqref{5.0}, \eqref{Heumann} and the chain rule,  we have
\begin{equation}\label{dc2}
d''(c) =-\mathrm{R}(k)+2 \, c^2 \, \mathrm{R}'(k)\, \left(\frac{d\omega}{dk}\right)^{-1}.
\end{equation}

First, notice that for all $k \in (0,1)$, we have
\begin{equation}\label{positivityP}
-\mathrm{R}(k)=-\int_0^L (\varphi(x))^2\; dx=- \|\varphi\|_{L^2}^2<0.
\end{equation}

On the other hand, we can plot the graphic of the function $\mathrm{R}'$ (see Figure \ref{graphicdP}) to show the existence of a number $k^* \simeq 0.593$ such that
$
\mathrm{R}'(k)<0
$
for all $k\in(0,k^*)$.
\begin{figure}[h]
\centering
\includegraphics[width=6cm,height=5.5cm]{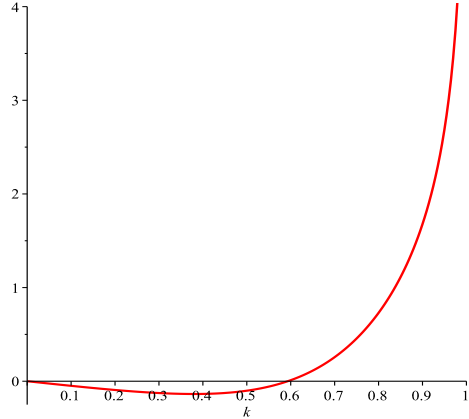}
\caption{Behavior of function $\mathrm{R}'$.} \label{graphicdP}
\end{figure}

Using the explicitly value of $\omega=1-c^2 \in (0,1)$ given in \eqref{valuew}, we have that $k^* \simeq 0.593$ defines uniquely a value $c^*$ given by
\begin{equation}\label{valuew*}
c^* \simeq \frac{0.65 \,\sqrt{-99.87+2.33L^2}}{L}>0.
\end{equation}

In addition, we have that $\omega \in (0,1)$ is a positive and strictly increasing function in terms of $k$, that is,
\begin{equation}\label{positivitydw}
\left(\frac{d\omega}{dk}\right)^{-1}>0,\ \ \ \mbox{for all}\ k\in(0,1).
\end{equation}

Thus, we have by \eqref{dc2}, the fact $R'(k)<0$ for all $k\in (0,k^*)$ and \eqref{positivitydw} that
\begin{equation}\label{d''} 
d''(c)<0,\; \text{for all} \; c \in(-1,-c^*]\cup[c^*,1).
\end{equation}

We have proved the convexity of $d$ over the interval $(-1,-c^*]\cup[c^*,1)$. Our next analysis is made by considering $k\in(k^*,1),$ that is, for $c\in(-c^*,c^*)$. In fact, for the case $ (k^*, 1),$ we do not have a precise answer for the convexity of the function $d$ since in this case this convexity depends on an eventual choice of the period $ L>2\pi $. Indeed, we turn back to the expression (\ref{5.0}) to have
\begin{equation}\label{d2inL}\begin{array}{llll}
	d''(c) & =& - \mathrm{R}(k) - 2\omega \, \mathrm{R}'(k) \left( \frac{d\omega}{dk} \right)^{-1} + \: 2 \, \mathrm{R}'(k) \left( \frac{d\omega}{dk} \right)^{-1}  \\\\
	& = &2 \, \mathrm{R}'(k) \, \mathrm{Q}(k) L^2 - \mathrm{R}(k) - 2\omega \, \mathrm{R}'(k) \left( \frac{d\omega}{dk} \right)^{-1}, 
	\end{array}\end{equation}
where $\mathrm{Q}:(0,1) \rightarrow \mathbb{R}_+^* $ is a function defined as
$
	\left( \frac{d\omega}{dk} \right)^{-1} = \: \mathrm{Q}(k) \, L^2.
$

Next, we define
\begin{equation}\label{alphaandbeta}
	\alpha=\alpha(k):= 2\, \mathrm{R}'(k) \, \mathrm{Q}(k) \quad \text{and} \quad  \beta=\beta(k):= \mathrm{R}(k) + 2\omega \, \mathrm{R}'(k) \left( \frac{d\omega}{dk} \right)^{-1}.
\end{equation}
We need to emphasize that $\alpha>0$ and $\beta>0$ only depend on the modulus $k$ since $\mathrm{R}$, $\mathrm{Q}$ and the product of $\omega$ with $\left( \frac{d\omega}{dk} \right)^{-1}$ only depend on $k$. By \eqref{d2inL} and \eqref{alphaandbeta}, we obtain for all $c\in(-c^*,c^*)$ that
\begin{equation}\label{d2second}
	d''(c) = \alpha(k) L^2 - \beta(k).
\end{equation}

Equation (\ref{d2second}) shows that  the positivity of $d''$ depends on an eventual choice of  $L$. Since \eqref{d2second} involves complicated functions to manipulate as the Lambda Heumann function, we can determine the convexity of $d$ using numerical calculations. In fact, we obtain the existence of $ L_0 \simeq 20.354 $ such that if $L \in (2\pi, L_0)$, we have 
$$
d''(c)  <0,\; \text{for all} \; c \in (-c^*,c^*).
$$
For the case $L>L_0$, we obtain a complicated scenario for the convexity of $d$. Indeed, according to the plots in the Figure \ref{figura2}, we can see that for some values of $L>L_0$, we obtain $d''(c)<0$ for some values of $k$, but when $L$ becomes large one has $d''(c)>0$. This last fact prevents us to use the abstract theory in \cite[Theorem 5.1]{grillakis2} for the linear instability since the difference $\text{n}(\mathcal{L}_e)-\text{p}(d'')=3-1=2$ is even.
\begin{figure}[!h]
	\subfigure[$L=20$]{
		\includegraphics[width=4.95cm,height=3.6cm]{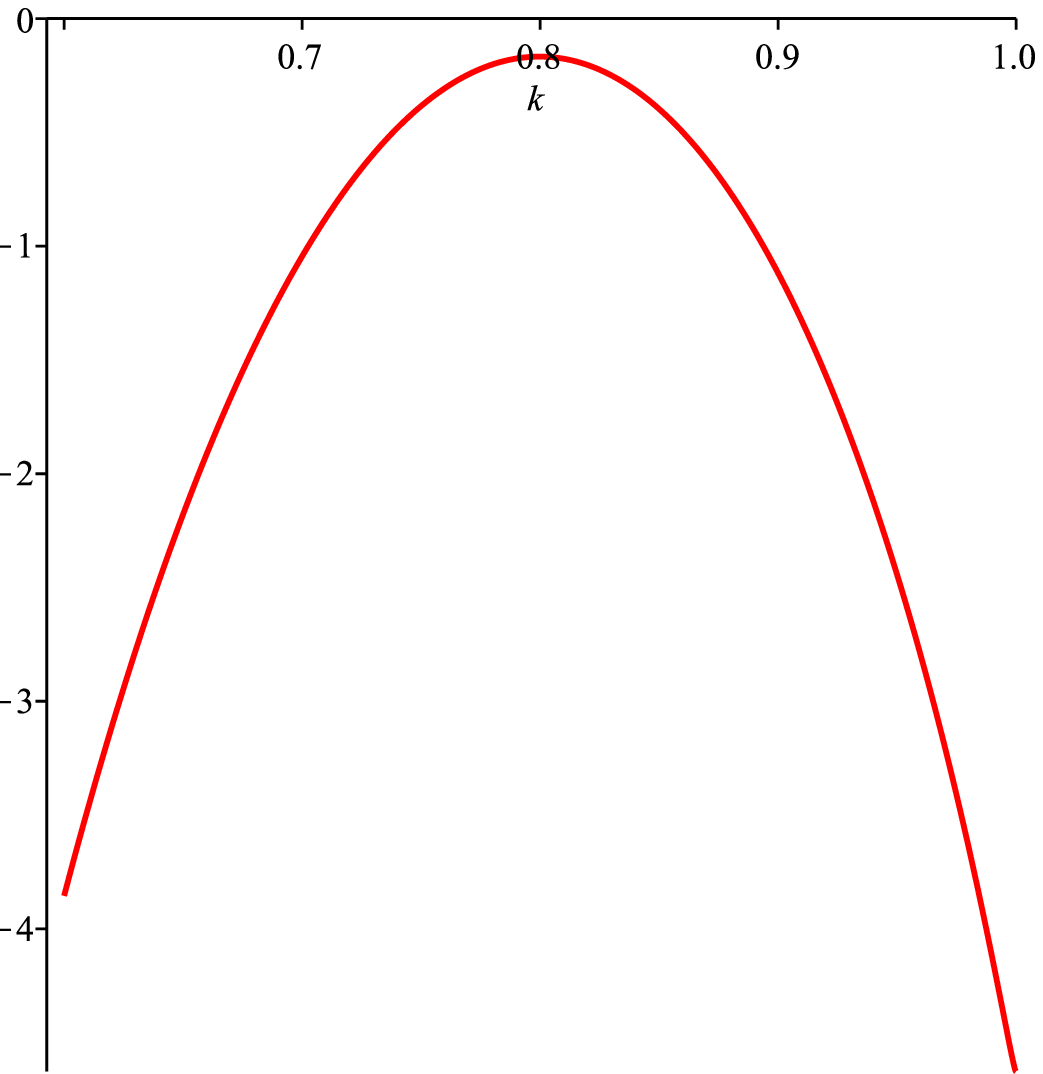}}
	\quad
	\subfigure[$L=20.2$]{
		\includegraphics[width=4.95cm,height=3.6cm]{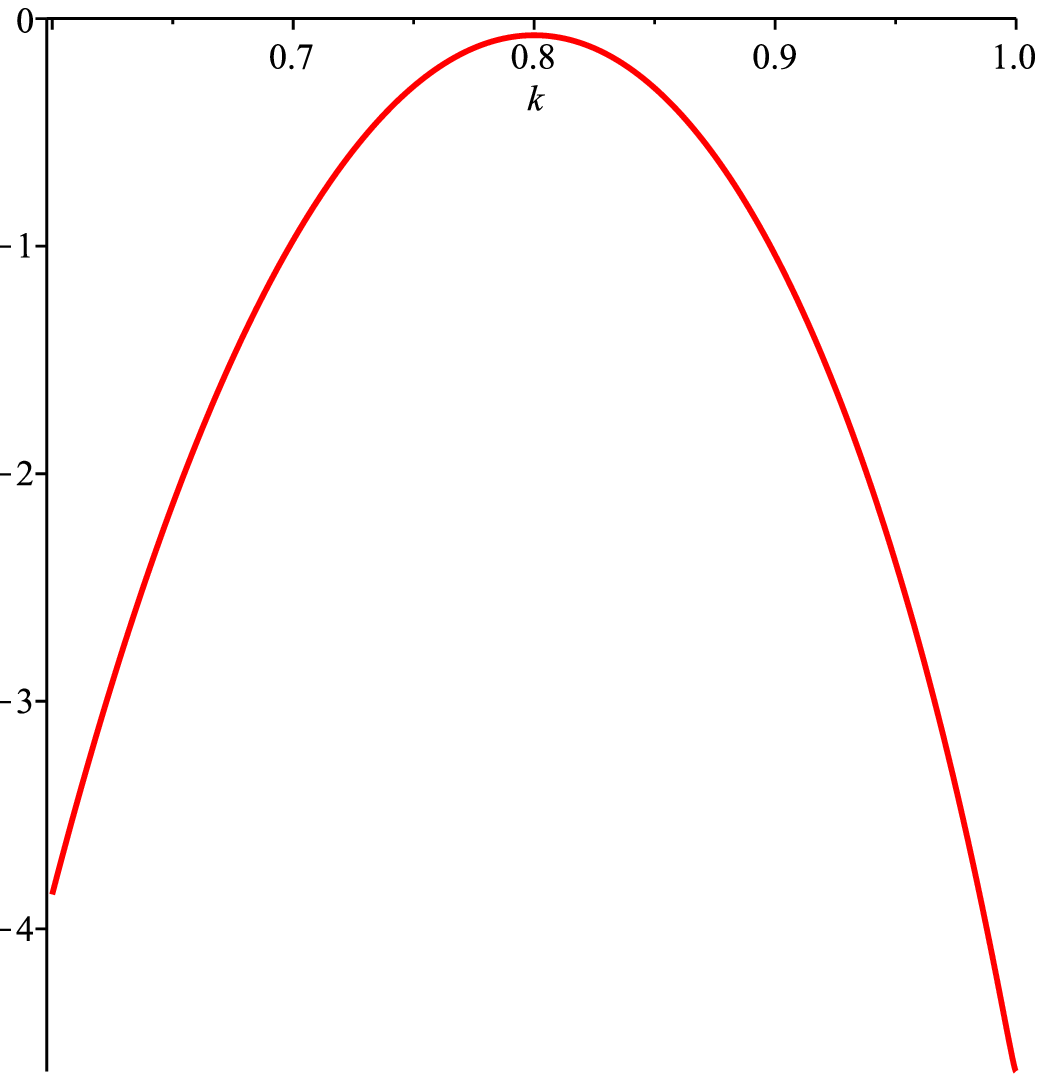}}
	\quad
	\subfigure[$L=20.4$]{
		\includegraphics[width=4.95cm,height=3.6cm]{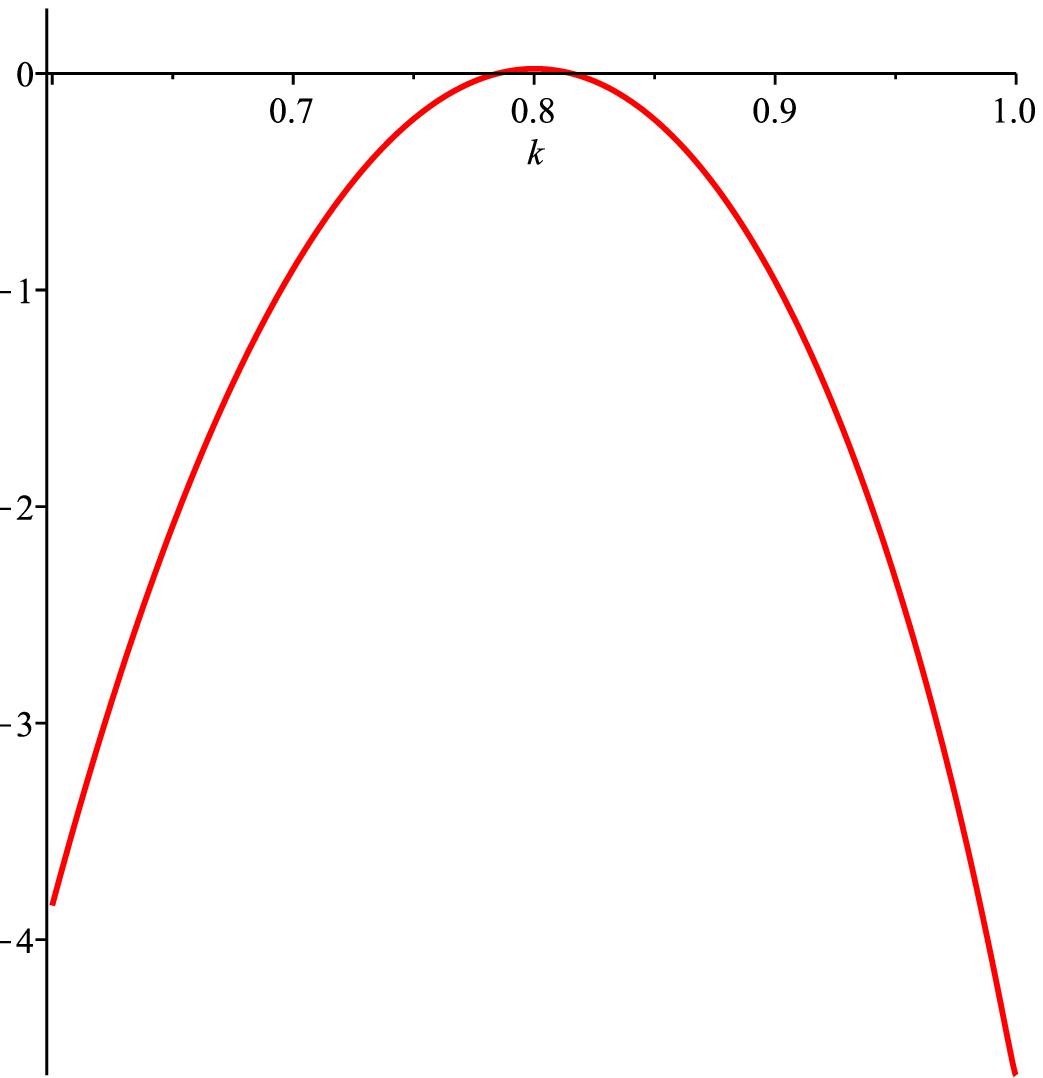}}
	\quad
	\subfigure[$L=400$]{
		\includegraphics[width=4.95cm,height=3.6cm]{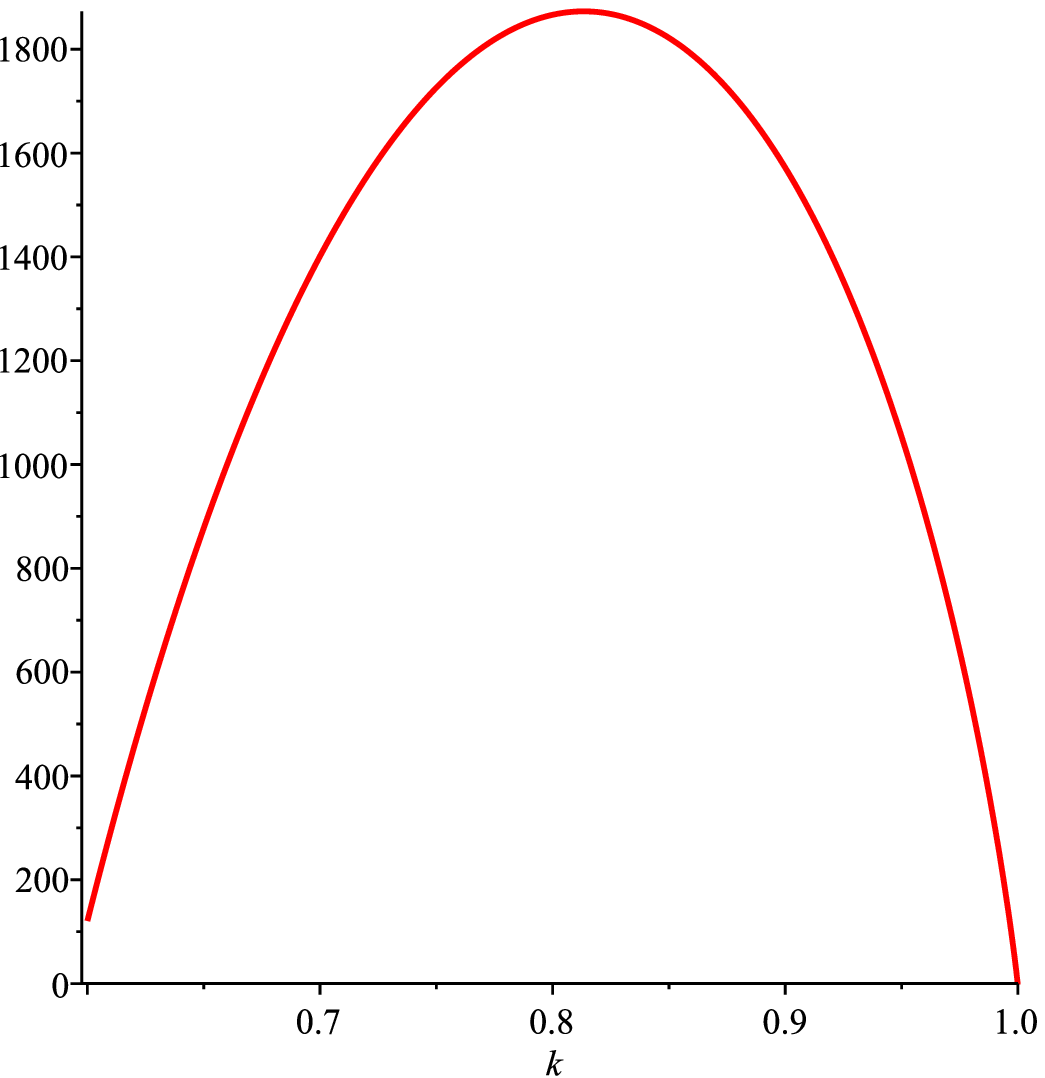}}
	\caption{Graphic of $d''$ for some values of $L > 2\pi$ in the interval $(k^*,1)$.}
	\label{figura2}
\end{figure}

\subsection{Orbital Instability for the cnoidal wave}\label{SubsectionInstability}

Let $L > 2\pi$ be fixed. As above, let us consider $\varphi = \varphi_c$ the cnoidal periodic solution given in  Theorem \ref{cnoidalcurve}.  Using the arguments in Subsection \ref{subsectionconvexityKG} together with Theorem \ref{eveneigenfunctions}, we obtain that the difference ${\rm n}(\mathcal{L}_e) - {\rm p}(d''(c))= 3 - 0 = 3$ is an odd number for all $c\in (-1,-c^*]\cup [c^*,1)$. For the case $L\in (2\pi,L_0)$, we also obtain that the difference is three for all $c\in(-c^*,c^*)$. This fact enables us to conclude the linear instability of periodic waves according to \cite[Theorem 5.1]{grillakis2} and our intention is to determine the orbital (nonlinear) instability. To do so, we use the main result in \cite{ShatahStraussbook} given by:

\begin{theorem} \label{sstehorem}
	In a Banach space $Y$, consider the evolution equation
	\begin{equation}\label{lineqA}
		\frac{du}{dt}=\mathcal{A}u+\mathcal{K}(u),
	\end{equation}
	where $\mathcal{A}$ is a linear operator and $\mathcal{K}$ is a nonlinearity. Assume the
	following:
	\begin{itemize}
		\item[{\bf (a)}] $\mathcal{A}$ generates a strongly continuous semigroup on $Y$.
		\item[{\bf (b)}] The spectrum of $\mathcal{A}$ meets the right half-plane
		$\{\lambda\in\mathbb{C};\ Re(\lambda)>0\}$.
		\item[{\bf (c)}] $\mathcal{K}(0)=0$, $\mathcal{K}:Y\rightarrow Y$ is continuous, and there exist
		$\rho_0>0$ and $\alpha_0>0$ such that $\|\mathcal{K}(u)\|_Y\leq
		\alpha_0\|u\|^{2}$ for all $\|u\|_Y<\rho_0$.
	\end{itemize}
	Then the zero solution is nonlinearly unstable.
\end{theorem}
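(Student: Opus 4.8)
\emph{Strategy.} The plan is to argue by contradiction: I assume the zero solution of \eqref{lineqA} is stable and show this is incompatible with hypothesis \textbf{(b)}. The engine of the proof is the competition between the exponential growth of the linearized flow $T(t):=e^{t\mathcal{A}}$, which is forced by the presence of spectrum in the open right half-plane, and the quadratic smallness of $\mathcal{K}$ near the origin. Since $\mathcal{K}$ is quadratic it is \emph{subcritical} relative to a suitably chosen exponential rate, so the linear instability is not destroyed by the nonlinear perturbation and the nonlinear orbit must leave a fixed ball around $0$.

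First I would turn \textbf{(b)} into a quantitative growth statement for the semigroup. By \textbf{(a)}, $T(t)$ is a $C_0$-semigroup, and the spectral inclusion theorem gives $e^{t\,\sigma(\mathcal{A})}\subseteq\sigma(T(t))$. Choosing $\lambda_0\in\sigma(\mathcal{A})$ with $\sigma:=\mathrm{Re}(\lambda_0)>0$ from \textbf{(b)}, this yields $e^{\sigma t}\le r(T(t))$, and the spectral-radius formula $r(T(t))=e^{\omega_0 t}$ identifies the growth bound $\omega_0:=\lim_{t\to\infty}t^{-1}\log\|T(t)\|\ge\sigma>0$. I would then fix an exponent $\lambda\in(0,\omega_0)$ with $2\lambda>\omega_0$ (possible because $\omega_0>0$); the strict inequality $2\lambda>\omega_0$ is the arithmetic core of the argument, since it is exactly what makes the Duhamel convolution of the quadratic term converge with a growth rate $e^{2\lambda t}$ that is dominated by the linear rate $e^{\lambda t}$ as long as the solution stays small.

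Next I would exhibit a growing direction and run a bootstrap on the Duhamel identity $u(t)=T(t)u_0+\int_0^t T(t-s)\mathcal{K}(u(s))\,ds$. In the present application the unstable spectrum of $\mathcal{A}$ reduces to finitely many isolated real eigenvalues (produced by the negative directions of $\mathcal{L}_e$ that make $\mathrm{n}(\mathcal{L}_e)-\mathrm{p}(d'')$ odd), so a Riesz spectral projection furnishes an eigenvector $\psi$ with $\mathcal{A}\psi=\lambda_0\psi$, $\lambda_0>0$, and a dual functional $\psi^\ast$ with $\langle\psi^\ast,\psi\rangle=1$ and $\langle\psi^\ast,T(t)w\rangle=e^{\lambda_0 t}\langle\psi^\ast,w\rangle$. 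Fixing $\varepsilon_0\in(0,\rho_0)$ and assuming stability, I would obtain $\delta>0$ so that the datum $u_0=\delta\psi$ (of norm comparable to $\delta$) generates a global solution with $\sup_t\|u(t)\|\le\varepsilon_0$. Testing the Duhamel identity against $\psi^\ast$ and setting $a(t)=\langle\psi^\ast,u(t)\rangle$ gives $a(t)=e^{\lambda_0 t}a(0)+\int_0^t e^{\lambda_0(t-s)}\langle\psi^\ast,\mathcal{K}(u(s))\rangle\,ds$; using $\|\mathcal{K}(u)\|\le\alpha_0\|u\|^2$ together with the subcriticality that $2\lambda_0$ exceeds the growth rate of $T(t)$ on the complementary invariant subspace, I would show that $a(t)$ stays comparable to $\delta e^{\lambda_0 t}$ until it reaches a size of order $\varepsilon_0$, which occurs at a finite escape time $T_\delta<\infty$ with $T_\delta\to\infty$ as $\delta\to0$. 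Hence $\sup_t\|u(t)\|\ge c\,\varepsilon_0$ for a constant $c>0$ independent of $\delta$, contradicting stability; this proves that the zero solution is nonlinearly unstable.

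The step I expect to be the main obstacle is the construction of the growing direction together with closing the nonlinear continuation. When the unstable spectrum is not isolated one cannot extract an eigenvector, and one must instead work directly with the growth bound $\omega_0$ through a weighted-norm fixed-point argument, controlling the convolution $\int_0^t\|T(t-s)\|\,\|u(s)\|^2\,ds$ by means of the sharp balance $2\lambda>\omega_0$ and an upper bound $\|T(r)\|\le M e^{(\omega_0+\varepsilon)r}$ with $\varepsilon$ small. A second delicate point is guaranteeing that the nonlinear solution actually survives up to the escape time: this rests on the local well-posedness theory (Theorem \ref{wellposedness1}) combined with the a priori bound $\|u(t)\|\le\varepsilon_0$, which is valid precisely while the orbit remains in the ball where the quadratic estimate in \textbf{(c)} applies. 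Finally, transporting this escape property to the standing wave $\Phi$ via the rotation symmetry $T_1(\theta)$ yields the asserted orbital instability in $X_e$.
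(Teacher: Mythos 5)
First, note that the paper does not actually prove Theorem \ref{sstehorem}: its ``proof'' is the citation of Theorem 1 in \cite{ShatahStraussbook}, so your attempt has to be measured against the Shatah--Strauss argument itself. Your opening step is correct (spectral inclusion plus the spectral radius formula does give $\omega_0\ge\mathrm{Re}\,\lambda_0>0$), but after that there is a genuine gap, and it sits exactly where you located your ``main obstacle''. Your core construction --- an eigenvector $\psi$ with $\mathcal{A}\psi=\lambda_0\psi$, a Riesz projection, and a dual functional $\psi^\ast$ --- is not available under hypotheses \textbf{(a)}--\textbf{(c)}. Hypothesis \textbf{(b)} says only that $\sigma(\mathcal{A})$ meets the open right half-plane; in a Banach space such spectrum need not contain any eigenvalue and need not be isolated, so no spectral projection exists. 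You justify the eigenvector by appealing to ``the present application'', but that proves a weaker, application-specific statement, not the stated abstract theorem. Moreover, even granting an eigenvector, your bootstrap needs the inequality that $2\lambda_0$ exceeds the growth rate of $T(t)$ on the complementary subspace, which you assert rather than prove: for $C_0$-semigroups there is no spectral mapping theorem, the growth bound $\omega_0$ can strictly exceed $\sup\mathrm{Re}\,\sigma(\mathcal{A})$, and nothing in \textbf{(a)}--\textbf{(c)} gives $2\,\mathrm{Re}\,\lambda_0>\omega_0$. Removing precisely these gap/spectral-mapping assumptions is the whole point of the Shatah--Strauss theorem, and is why the paper can invoke it for $J\mathcal{L}$ without analyzing the essential spectrum of the linearized flow.

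Your proposed repair for the general case --- ``work directly with the growth bound $\omega_0$'' using $2\lambda>\omega_0$ and $\|T(r)\|\le Me^{(\omega_0+\varepsilon)r}$ --- does not close. The growth bound provides only upper bounds together with the operator-norm lower bound $\|T(n)\|\ge e^{\omega_0 n}$, whose maximizing vectors change with $n$; a single fixed datum comes, in general, with a lower rate $\lambda$ strictly below the upper rate $\omega_0+\varepsilon$. Quantitatively, with main term $\delta e^{\lambda t}$ and Duhamel error of order $\delta^2e^{2(\omega_0+\varepsilon)t}$, at the escape time $T$ defined by $\delta e^{\lambda T}\sim\varepsilon_0$ the ratio of error to main term is of order $\delta^{-2(\omega_0+\varepsilon-\lambda)/\lambda}$, which blows up as $\delta\to0$ whenever $\lambda<\omega_0+\varepsilon$; the rate mismatch destroys the argument exactly when you need it. The missing idea in \cite{ShatahStraussbook} is to pass to the time-one map $L=e^{\mathcal{A}}$, pick $z_0\in\sigma(L)$ with $|z_0|=e^{\omega_0}=r(L)$, and use that the boundary of the spectrum lies in the approximate point spectrum: there are unit vectors $v_j$ with $\|(L-z_0)v_j\|\to0$, and the identity $L^n-z_0^n=(L-z_0)\sum_{i=0}^{n-1}z_0^iL^{n-1-i}$ shows that, once the escape window $[0,N]$ (hence $\delta$) is fixed, one may choose $j=j(N)$ so that $\|L^nv_j\|\ge\tfrac12 e^{\omega_0 n}$ for all $n\le N$ --- a growing direction at the \emph{exact} rate $\omega_0$, with no $\varepsilon$-loss. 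With this datum the quadratic Duhamel error is $O(\delta^2e^{2\omega_0 t})$ (the $\varepsilon$ in the propagator bound is absorbed because $2\omega_0>\omega_0+\varepsilon$), the linear term dominates until $\delta e^{\omega_0 t}$ reaches a fixed constant, and the escape size is independent of $\delta$. A last, smaller point: under \textbf{(c)} the nonlinearity is merely continuous, so local existence and continuation of solutions are not automatic in infinite dimensions; in the abstract theorem this is handled in the formulation with mild solutions inside the contradiction argument, not by the application-specific Theorem \ref{wellposedness1} that you invoke.
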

\begin{proof}
	See Theorem 1 in \cite{ShatahStraussbook}.
\end{proof}

By Theorem $\ref{sstehorem}$, we obtain the following orbital instability result for the Klein-Gordon equation.

\begin{theorem}\label{instabilityKlein-Gordon}
Let $L>2\pi$ be fixed. For $c\in(-1,1)$, consider $\varphi$ the cnoidal periodic solution given by Theorem \ref{cnoidalcurve}.\\
i) If $c \in(-1,-c^*]\cup [c^*,1)$, where $c^*\in (0,1)$ is given by \eqref{valuew*}, then the periodic wave $\Phi$ is orbitally unstable in $X_e$ in the sense of Definition $\ref{stadef}$. \\
ii) For $L\in (2\pi,L_0)$, where $L_0\approx 20.354$, the periodic wave $\Phi$ is orbitally unstable for all $c\in (-1,1)$.
\end{theorem}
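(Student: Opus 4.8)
The plan is to invoke the abstract instability criterion in Theorem \ref{sstehorem} by verifying that the linearized dynamics generated by the Klein-Gordon flow around $\Phi$ possesses a genuine eigenvalue with positive real part, and then upgrading this linear instability to the nonlinear (orbital) statement. The starting point is the spectral count already assembled in the paper: by Theorem \ref{eveneigenfunctions} we have $\mathrm{n}(\mathcal{L}_e)=3$ with a one-dimensional kernel spanned by $(0,0,\varphi,-c\varphi)$, and by the convexity analysis of Subsection \ref{subsectionconvexityKG} we have $\mathrm{p}(d''(c))=0$ precisely on the regimes described in parts (i) and (ii). Thus in both cases the difference $\mathrm{n}(\mathcal{L}_e)-\mathrm{p}(d'')=3$ is odd, which is exactly the hypothesis required by \cite[Theorem 5.1]{grillakis2} to conclude that the linearized operator $J\mathcal{L}_e$ has a positive real eigenvalue on $X_e$.

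Concretely, I would first set $\mathcal{A}=J\mathcal{L}_e$ (with $J$ as in \eqref{J}, appropriately restricted to the even subspace $X_e$) and write the full Klein-Gordon evolution \eqref{hamiltonian-kleingordon} in the abstract form \eqref{lineqA}, with $\mathcal{K}$ collecting the purely nonlinear remainder coming from the quintic term after subtracting off the linearization at $\Phi$. The first task is to check hypothesis \textbf{(a)}: $\mathcal{A}$ generates a $C_0$-semigroup on $X_e=\mathbb{H}^1_{per,e}\times\mathbb{L}^2_{per,e}$. This follows from the same semigroup construction underlying the well-posedness Theorem \ref{wellposedness1}, since the linear part is a bounded perturbation of the free Klein-Gordon generator. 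Hypothesis \textbf{(b)} is the heart of the matter: the oddness of $\mathrm{n}(\mathcal{L}_e)-\mathrm{p}(d'')$ feeds into the instability index theorem of Grillakis to produce an eigenvalue $\lambda$ of $\mathcal{A}$ with $\mathrm{Re}(\lambda)>0$, so the spectrum of $\mathcal{A}$ meets the right half-plane.

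Hypothesis \textbf{(c)} is the routine estimate: one verifies $\mathcal{K}(0)=0$ and that $\mathcal{K}$ is quadratically bounded near the origin, $\|\mathcal{K}(u)\|_{X_e}\leq \alpha_0\|u\|_{X_e}^{2}$ for $\|u\|_{X_e}<\rho_0$. Because the nonlinearity originates from $|u|^4u$, whose Taylor expansion about $\Phi$ begins at quadratic order in the perturbation, and because $\mathbb{H}^1_{per,e}$ embeds continuously into $L^\infty_{per}$ (so products of $H^1$ functions are controlled), this estimate is standard; I would isolate the leading remainder and bound it using the algebra/Sobolev embedding for the periodic space. With \textbf{(a)}--\textbf{(c)} in hand, Theorem \ref{sstehorem} yields nonlinear instability of the zero solution of \eqref{lineqA}, which is precisely the orbital instability of $\Phi$ in $X_e$ in the sense of Definition \ref{stadef}, giving both statements (i) and (ii) according to the corresponding ranges of $c$ and $L$.

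I expect the main obstacle to be the rigorous passage from the abstract index count to the concrete verification of hypothesis \textbf{(b)}, namely confirming that the Grillakis instability criterion genuinely applies in the present periodic, even-restricted Hamiltonian setting (with the rotation symmetry only, the translation having been discarded on $X_e$). The subtle points are that one must ensure the symplectic structure $J$ is compatible with the restriction to $X_e$, that the kernel of $\mathcal{L}_e$ is exactly accounted for by the single remaining symmetry, and that the count $\mathrm{n}(\mathcal{L}_e)-\mathrm{p}(d'')$ being odd translates into an odd number of unstable directions for $J\mathcal{L}_e$, hence a strictly positive real eigenvalue rather than a purely imaginary or Jordan-block configuration; once this spectral input is secured, the remaining verifications are comparatively mechanical.
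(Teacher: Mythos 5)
Your proposal follows essentially the same route as the paper's proof: both apply the Shatah--Strauss criterion (Theorem \ref{sstehorem}) to $\mathcal{A}=J\mathcal{L}_e$, verifying \textbf{(a)} by a bounded-perturbation/skew-adjointness semigroup argument, \textbf{(b)} via the odd count ${\rm n}(\mathcal{L}_e)-{\rm p}(d'')=3$ together with \cite[Theorem 5.1]{grillakis2}, and \textbf{(c)} by the quadratic bound on the nonlinear remainder. The one detail the paper makes explicit that you leave implicit is the passage to the rotating frame, $V(t)=T_1(-ct)U(t)-\Phi$: this is what turns $\Phi$ into a genuine equilibrium (so that $\mathcal{K}(0)=0$ holds) and makes the correct linearization $J\mathcal{L}=JG''(\Phi)$, with the $-cF''$ contribution included, rather than $JE''(\Phi)$.
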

\begin{proof}
Our intention is to apply Theorem $\ref{sstehorem}$. In fact, let us consider the following perturbation of the wave $(\varphi,c\varphi,0,0)$ over the restricted space $Y=X_e$ given by
\begin{equation}\label{pertubation}
V(t)=T_1(-ct)U(t)-\Phi,
\end{equation}
where $\Phi=(\varphi, c \varphi,0,0)$, $c\in(-1,1)$ and $t\geq0$. Since $J^{-1}=-J$, we can see by the equations $(\ref{KF2})$ and $(\ref{KF3})$ that  
\begin{eqnarray}\label{pertubation2}
\frac{dV}{dt}&=& J \mathcal{L}V+J^{-1}(E'(\Phi+V)-E'(\Phi)-E''(\Phi)V) \nonumber \\
& = &  J \mathcal{L}V+J(E'(\Phi)-E'(\Phi+V)+E''(\Phi)V) \nonumber \\
&=&  J\mathcal{L}V+JN(V)=:\mathcal{A}V+\mathcal{K}V,
\end{eqnarray}
where
\begin{equation}\label{explicityN}
N(V)=\begin{pmatrix}
|\varphi+u|^4 \varphi + |\varphi+u|^4\text{Re} \, u -|\varphi|^4\varphi-5|\varphi|^4\text{Re}\,u \\ 
0 \\ 
|\varphi+u|^4 \text{Im}\, u-|\varphi|^4\text{Im}\,u \\ 
0
\end{pmatrix}
\end{equation}
and $V=(\text{Re}\,u,\text{Im}\,v, \text{Im}\,u, \text{Re}\,v)$.

The procedure is similar as determined in \cite[Theorem 3.2]{NataliPastor2014} and it suffices to show that \eqref{pertubation2} has the zero as a nonlinearly unstable solution. We prove that items {\bf (a)}-{\bf (c)} in Theorem $\ref{sstehorem}$ occur in our context. 

\noindent {\bf (a)} We prove that $\mathcal{A}=J\mathcal{L}$ is an infinitesimal generator of a $C_0$-semigroup on $X_e$. Indeed, observe that
$$
\mathcal{A}=\begin{pmatrix}
0 & 0 & 0 & 1 \\ 
0 & 0 & -(-\partial_x^2+1) & 0 \\ 
0 & 1 & 0 & 0 \\ 
-(-\partial_x^2+1) & 0 & 0 & 0
\end{pmatrix} 
+
\begin{pmatrix}
0 & 0 & c & 0 \\ 
0 & 0 & \varphi^4 & -c \\ 
-c & 0 & 0 & 0 \\ 
5\varphi^4 & c & 0 & 0
\end{pmatrix} 
 =: A+P.
$$
Since $P: X_e \longrightarrow X_e$ is a bounded operator, we see by \cite[Chapter 3, Theorem 1.1]{pazy} that it suffices to show that $A$ is infinitesimal generator of a $C_0$-semigroup on $X_e$. In fact, it is easy to see that the operator $-\partial_x^2+1$ is a positive and self-adjoint operator, so that $B=(-\partial_x^2+1)^{\frac{1}{2}}$ is well defined. We can define the inner product, for all $f=(u_1,u_2,v_1,v_2),g=(\tilde{u}_1,\tilde{u}_2,\tilde{v}_1,\tilde{v}_2) \in X_e$ as 
$$
\big((u_1,u_2,v_1,v_2), (\tilde{u}_1,\tilde{u}_2,\tilde{v}_1,\tilde{v}_2)\big)_{X_e}:=(Bu_1,B\tilde{u}_1)_{L^2}+(u_2,\tilde{u}_2)_{L^2}+(Bv_1,B\tilde{v}_1)_{L^2}+(v_2,\tilde{v}_2)_{L^2}
$$
which is equivalent to the usual inner product in $X_e$. From this and the self-adjointness of $B$, it follows that $A$ is a skew-adjoint operator. The remainder of the result follows by Stone's Theorem.

\vspace{10pt}

\noindent {\bf (b)} We claim that the spectrum of $\mathcal{A}$ intersects the set $\{\lambda \in \mathbb{C}\; ; \; \text{Re}(\lambda)>0\}$. In fact, by the arguments in Subsection \ref{subsectionconvexityKG} together with Theorem \ref{eveneigenfunctions}, we obtain that the difference ${\rm n}(\mathcal{L}_e) - {\rm p}(d''(c))= 3 - 0 = 3$ is an odd number for all $c\in (-1,-c^*]\cup [c^*,1)$. For the case $L\in (2\pi,L_0)$, we also obtain that the difference is three for all $c\in(-c^*,c^*)$. Thus, by \cite[Theorem 5.1]{grillakis2} our claim is now established.

\vspace{10pt}

\noindent {\bf (c)} The operator $\mathcal{K}=JN: X_e \longrightarrow X_e$ is well defined and satisfies $\mathcal{K}(0)=0$. Moreover, by the explicit representation of $N$ in \eqref{explicityN}, there exists $\alpha_0>0$ such that for every $V \in X_e$ satisfying $\|V\|_{X_e}\leq 1$ we have, 
$$
\|\mathcal{K}(V)\|_{X_e} \leq \alpha_0 \|V\|_{X_e}^2.
$$

By {\bf (a)}-{\bf (c)} and Theorem $\ref{sstehorem}$, we conclude that the zero solution of \eqref{pertubation} is non-linearly unstable. A rudimentary calculation implies that $\Phi$ is also orbitally unstable and the theorem is now proved.
\end{proof}

\section{Orbital instability of the cnoidal wave solutions for the QNLS}\label{section6}

Let $L>0$ be fixed. The goal of this section is to establish a result of orbital instability 
for the periodic standing wave solutions of the form
$
u(x,t)= e^{i\omega t} \varphi_{\omega}(x),
$
where $ \omega \in \big(\tfrac{4\pi^2}{L^2}, \infty\big)$ and $\varphi$ is given in Theorem \ref{cnoidalcurveNLS}. We consider the complex energy space $Z:= \mathbb{H}_{per}^1$ and it is well known that the equation \eqref{NLS-equation} also is invariant under  two basic symmetries: translation and rotation. The energy space in this case is $Z_e:= \mathbb{H}_{per,e}^1$ 
and as in the Definition \ref{stadef} we have the following definition.

\begin{definition}\label{stadefevenNLS}
We say that the periodic standing wave solution $u(x,t)=e^{i\omega t}\varphi(x)$ is orbitally stable  if for all $\varepsilon>0$ there exists $\delta>0$ such that for $u_0 \in Z_e$ such that $\|u_0-\varphi\|_{Z}<\delta, $ then the solution $u$ of \eqref{NLS-equation}, with initial data $u(0)=u_0$ can be extended globally to $t\geq0$  and
$$
\sup_{t\geq0}\inf_{\theta \in \mathbb{R}}\big\|u(t)-e^{i\theta}\varphi \big\|_Z< \varepsilon.
$$
Otherwise, the standing wave solution is said  orbitally unstable.
\end{definition}

Before establishing our stability result, we present the following well-posedness result associated to the QNLS equation.

\begin{theorem}\label{wellposedness1NLS}
The Cauchy problem associated with \eqref{NLS-equation} is locally well-posed in $Z_e$. More precisely, for each $u_0 \in Z_e$ there exists $T>0$ and a unique solution $u \in C([0,T], Z_e)$ of the equation \eqref{NLS-equation} with
$u(0)=u_0$. In addition, for all $T_0 \in (0,T)$ the mapping
\begin{equation*}
	u_0 \in Z_e \longrightarrow u \in C([0,T_0], Z_e)
\end{equation*}
is continuous.
\end{theorem}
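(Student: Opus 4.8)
The plan is to establish local well-posedness for the quintic Schrödinger equation \eqref{NLS-equation} in the even energy space $Z_e = \mathbb{H}_{per,e}^1$ via the standard contraction-mapping (fixed-point) argument applied to the Duhamel integral formulation, exactly as one does for the cubic or other subcritical/critical periodic NLS. First I would rewrite \eqref{NLS-equation} in its mild form,
\begin{equation*}
u(t) = e^{it\partial_x^2} u_0 + i\int_0^t e^{i(t-\tau)\partial_x^2}\bigl(|u(\tau)|^4 u(\tau)\bigr)\,d\tau,
\end{equation*}
where $\{e^{it\partial_x^2}\}_{t\in\mathbb{R}}$ is the unitary group generated by the skew-adjoint operator $i\partial_x^2$ on $\mathbb{H}_{per}^1$. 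On the circle this group is defined through the periodic Fourier multiplier $e^{-ik^2 t}$, and since it acts diagonally on Fourier modes it preserves the $H^s_{per}$ norms and, crucially for our setting, commutes with the reflection $x\mapsto -x$; hence it maps $\mathbb{H}_{per,e}^1$ into itself. This is what lets the whole construction be carried out on the restricted even subspace $Z_e$ rather than on all of $Z$.

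The key steps, in order, are: (i) record that $e^{it\partial_x^2}$ is a $C_0$-group of isometries on $Z_e$; (ii) verify that the nonlinearity $u\mapsto |u|^4 u$ is locally Lipschitz from $Z_e$ to $Z_e$, using that $H^1_{per}\hookrightarrow L^\infty_{per}$ is a Banach algebra in one dimension (the Sobolev embedding $H^1_{per}([0,L])\hookrightarrow C_{per}([0,L])$ gives control of the $L^\infty$ norm, so that $\||u|^4 u - |v|^4 v\|_{H^1}\lesssim (\|u\|_{H^1}^4+\|v\|_{H^1}^4)\|u-v\|_{H^1}$), and noting that $|u|^4u$ of an even function is even so the map leaves $Z_e$ invariant; (iii) define the solution map $\Gamma(u)(t)$ as the right-hand side of the Duhamel formula on the metric space $C([0,T],\overline{B}_R(u_0))\subset C([0,T],Z_e)$, and show that for $R$ chosen relative to $\|u_0\|_{Z}$ and $T=T(\|u_0\|_{Z})>0$ sufficiently small, $\Gamma$ is a contraction; (iv) apply the Banach fixed-point theorem to obtain a unique mild solution $u\in C([0,T],Z_e)$, and standard continuation arguments to get uniqueness and continuous dependence of the data-to-solution map on $[0,T_0]$. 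The reflection invariance of both $e^{it\partial_x^2}$ and the nonlinearity guarantees that the fixed point stays in $Z_e$.

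The main obstacle, and the only substantive point beyond bookkeeping, is controlling the quintic nonlinearity at the $H^1$ level; this is where the one-dimensional algebra property $H^1_{per}\hookrightarrow L^\infty_{per}$ does the essential work and makes the Lipschitz estimate elementary, so that no Strichartz machinery is needed and the plain energy-space contraction closes. I would therefore not belabor the Strichartz or dispersive refinements: the problem being stated only at the level of $H^1_{per}$ local well-posedness, the above semigroup-plus-fixed-point scheme suffices, and indeed the proof given in the excerpt refers to the analogous Klein--Gordon result obtained by classical semigroup theory together with density arguments (compare Theorem \ref{wellposedness1} and \cite[Theorem 2.1]{cardoso2013}). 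One can thus present the argument succinctly, invoking the algebra property for the nonlinear estimate and the unitarity of $e^{it\partial_x^2}$ for the linear part, and conclude by the contraction principle.
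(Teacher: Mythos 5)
Your proposal is correct, and it is in fact a genuine proof, whereas the paper offers none: its ``proof'' of Theorem \ref{wellposedness1NLS} consists entirely of the citation \cite[Theorem 2.3]{bourgain} (see also \cite[Corollary 5.6]{Iorio}). The contrast is worth noting. Your route — Duhamel formulation, unitarity of $e^{it\partial_x^2}$ on $H^s_{per}$, and the local Lipschitz bound $\||u|^4u-|v|^4v\|_{H^1}\lesssim(\|u\|_{H^1}^4+\|v\|_{H^1}^4)\|u-v\|_{H^1}$ coming from the one-dimensional embedding $H^1_{per}\hookrightarrow L^\infty_{per}$ — closes by a plain contraction in $C([0,T],Z_e)$, with existence time depending only on $\|u_0\|_{H^1}$; since the quintic nonlinearity is $H^1$-subcritical in one dimension, no dispersive input is needed, exactly as you say. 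The references the paper leans on (Bourgain's periodic Strichartz/$X^{s,b}$ machinery, Iorio--Iorio) are designed to reach lower regularity and are strictly stronger tools than the statement requires; what your argument buys is self-containedness and, importantly, an explicit treatment of the restriction to the even subspace: the multiplier $e^{-ik^2t}$ is invariant under $k\mapsto -k$ and $|u|^4u$ preserves evenness, so the fixed point lives in $Z_e$. The cited theorems are stated on the full space $\mathbb{H}^1_{per}$, and passing to $Z_e$ requires precisely this symmetry-plus-uniqueness observation (if $u_0$ is even, $u(-x,t)$ solves the same problem, so uniqueness forces $u(\cdot,t)$ even), a point the paper leaves implicit. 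Only one cosmetic remark: centering the ball for the contraction at $u_0$ is slightly unorthodox — one usually takes $\overline{B}_R(0)$ with $R=2\|u_0\|_{Z}$ or centers at the free evolution $e^{it\partial_x^2}u_0$ — but this does not affect the validity of your argument.
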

\begin{proof}
See \cite[Theorem 2.3]{bourgain} (see also \cite[Corollary 5.6]{Iorio}).
\end{proof}

\subsection{Convexity of the function $\boldsymbol{\mathsf{d}}$.} \label{subsectionconvexityNLS}

 Let $L>0$ be fixed. For $\omega \in \mathcal{I}$, consider $\varphi$ the cnoidal wave given by Theorem $\ref{cnoidalcurveNLS}$. Since $\omega \in \mathcal{I} \longmapsto\varphi_{\omega}$ is smooth, we can define $\mathsf{d}: \mathcal{I} \subset \mathbb{R}_+ \longrightarrow \mathbb{R} $ as
$
\mathsf{d}(\omega)=\mathcal{E}(\varphi,0)+\omega \mathcal{F}(\varphi,0).
$
Since $\mathcal{G}'(\varphi,0)=0$, we have
\begin{equation}\label{5.0NLS}
\mathsf{d}''(\omega) = \frac{d}{d\omega} \mathcal{F}(\varphi,0)=\frac{1}{2}\frac{d}{d\omega}\int_0^L (\varphi(x))^2\; dx.
\end{equation}

By \eqref{Heumann}, \eqref{5.0NLS}  and the chain rule, we can write
\begin{eqnarray}\label{d''NLS}
\mathsf{d}''(\omega)=\frac{1}{2}\frac{d}{dk}\left(\int_0^L (\varphi(x))^2\; dx \right) \left( \frac{d\omega}{dk}\right)^{-1}=\frac{1}{2}\;\mathrm{R}'(k) \left( \frac{d\omega}{dk}\right)^{-1}.
\end{eqnarray}

Since $\mathrm{R}'(k)<0$ for all $k\in(0,k^*)$, we have by  \eqref{positivitydw} and \eqref{d''NLS} that
\begin{equation}\label{d2}
\mathsf{d}''(\omega)<0, \; \text{for all} \; \omega \in (\omega^*, \infty)
\end{equation}
where
\begin{equation}\label{valuew*2}
\omega^* \simeq \frac{42.83}{L^2}>0
\end{equation}
is defined uniquely by  \eqref{valuew} and  $k^* \simeq 0.593$.

Due to $(\ref{d2})$, we obtain that ${\rm p}(\mathsf{d}''(\omega))=0$ for all $\omega \in (\omega^*,+\infty)$.

\subsection{Orbital Instability for the cnoidal wave}

By Theorem \ref{eveneigenfunctionsNLS}, we have 
$$
\text{n}(\mathcal{L}_e) = 3 \quad \text{and} \quad \Ker(\mathcal{L}_e)=[(0,\varphi)].
$$

In addition, by Subsection \ref{subsectionconvexityNLS} and the Theorem \ref{eveneigenfunctionsNLS} we obtain that the difference \linebreak ${\rm n}(\mathcal{L}_e) - {\rm p}(\mathsf{d}''(\omega))= 3 - 0 = 3$ is an odd number for all $\omega\in (\omega^*,+\infty)$. Thus, we have the following result of orbital instability.

\begin{theorem}
Let $L>0$ be fixed. For $\omega \in (\omega^*,\infty)$, where $\omega^*>0$ is given in \eqref{valuew*2} consider $\varphi$ as the cnoidal periodic solution given by  Theorem \ref{cnoidalcurveNLS}. The standing wave $u(x,t)=e^{i\omega t}\varphi(x)$ is orbitally unstable in $Z_e$ in the sense of Definition $\ref{stadefevenNLS}$. 
\end{theorem}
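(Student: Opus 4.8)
The plan is to reduce the orbital (nonlinear) instability of the standing wave $u(x,t)=e^{i\omega t}\varphi(x)$ to a direct application of the abstract Shatah--Strauss criterion in Theorem \ref{sstehorem}, exactly mirroring the argument already carried out for the QKG equation in Theorem \ref{instabilityKlein-Gordon}. First I would introduce the modulated perturbation variable that removes the rotational phase: writing $U=(\operatorname{Re}u,\operatorname{Im}u)$, I set $V(t)=e^{-\omega \mathcal{J}t}U(t)-(\varphi,0)$ so that, using \eqref{NLS-2} and the Hamiltonian formulation \eqref{hamiltonian-NLS}, the evolution for $V$ takes the form
\begin{equation*}
\frac{dV}{dt}=\mathcal{J}\mathcal{L}V+\mathcal{J}N(V)=:\mathcal{A}V+\mathcal{K}(V),
\end{equation*}
where $\mathcal{L}$ is the operator in \eqref{fulloperatorNLS} and $N(V)$ collects the purely nonlinear (quintic) remainder $\mathcal{E}'(\varphi+V)-\mathcal{E}'(\varphi)-\mathcal{E}''(\varphi)V$. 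The zero solution of this system corresponds to the standing wave, so it suffices to prove that $V\equiv 0$ is a nonlinearly unstable equilibrium.

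Next I would verify hypotheses \textbf{(a)}--\textbf{(c)} of Theorem \ref{sstehorem} on the Banach space $Y=Z_e=\mathbb{H}^1_{per,e}$. For \textbf{(a)}, I would split $\mathcal{A}=\mathcal{J}\mathcal{L}$ into the free part $\mathcal{J}(-\partial_x^2+\omega)$ plus a bounded multiplication perturbation coming from the potential terms $-5\varphi^4$ and $-\varphi^4$; the free part generates a unitary group (it is skew-adjoint with respect to an inner product equivalent to the usual one on $Z_e$, via $B=(-\partial_x^2+\omega)^{1/2}$), and the bounded perturbation preserves the $C_0$-group property by the classical bounded-perturbation theorem, just as in item \textbf{(a)} of the proof of Theorem \ref{instabilityKlein-Gordon}. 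For \textbf{(c)}, since the nonlinearity is quintic and smooth, $N$ vanishes to second order at the wave, so an explicit expansion gives $\|\mathcal{K}(V)\|_{Z_e}\leq \alpha_0\|V\|_{Z_e}^2$ on a unit ball, with $\mathcal{K}(0)=0$ and continuity immediate.

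The substantive step is hypothesis \textbf{(b)}: the spectrum of $\mathcal{A}$ must meet the open right half-plane. This is precisely where the spectral count enters. By Theorem \ref{eveneigenfunctionsNLS} the restricted operator $\mathcal{L}_e$ has $\operatorname{n}(\mathcal{L}_e)=3$ and a one-dimensional kernel spanned by $(0,\varphi)$, while Subsection \ref{subsectionconvexityNLS} establishes via \eqref{d2} that $\mathsf{d}''(\omega)<0$, hence $\operatorname{p}(\mathsf{d}''(\omega))=0$, for all $\omega\in(\omega^*,\infty)$. Therefore the difference $\operatorname{n}(\mathcal{L}_e)-\operatorname{p}(\mathsf{d}''(\omega))=3-0=3$ is odd, and the linear instability criterion of \cite[Theorem 5.1]{grillakis2} guarantees that $\mathcal{A}=\mathcal{J}\mathcal{L}$ possesses an eigenvalue with strictly positive real part, yielding \textbf{(b)}. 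With all three hypotheses in force, Theorem \ref{sstehorem} gives nonlinear instability of the zero solution of the $V$-equation, and unwinding the modulation $V(t)=e^{-\omega\mathcal{J}t}U(t)-(\varphi,0)$ transfers this to orbital instability of $u(x,t)=e^{i\omega t}\varphi(x)$ in $Z_e$ in the sense of Definition \ref{stadefevenNLS}.

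I expect the main obstacle to be bookkeeping rather than conceptual: making sure the modulation and the Hamiltonian structure \eqref{hamiltonian-NLS} combine to produce exactly the clean form $\mathcal{A}V+\mathcal{K}(V)$ with the correct identification $\mathcal{A}=\mathcal{J}\mathcal{L}$, and confirming that restricting to even functions $Z_e$ (where translation symmetry is lost, so only the rotational symmetry survives) does not disturb the validity of the count $\operatorname{n}(\mathcal{L}_e)=3$ from Theorem \ref{eveneigenfunctionsNLS} nor the applicability of \cite[Theorem 5.1]{grillakis2}. Since all the heavy spectral and convexity analysis has already been completed in the preceding sections, this final theorem is essentially a transcription of the QKG argument with $c$-dependence replaced by direct $\omega$-dependence.
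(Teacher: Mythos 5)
Your proposal is correct and takes essentially the same route as the paper: the paper's own proof of this theorem is a two-line reduction to the QKG argument of Theorem \ref{instabilityKlein-Gordon}, replacing $T_1$ by the rotation $T_2(\omega t)$ acting on $(\varphi,0)$, which is precisely what you spell out --- the count ${\rm n}(\mathcal{L}_e)-{\rm p}(\mathsf{d}'')=3-0=3$ from Theorem \ref{eveneigenfunctionsNLS} and Subsection \ref{subsectionconvexityNLS}, hypothesis \textbf{(b)} via \cite[Theorem 5.1]{grillakis2}, and the semigroup/nonlinearity verifications \textbf{(a)} and \textbf{(c)} copied from the QKG case. The only slip is the direction of the modulation: with the paper's convention $\mathcal{J}=\left(\begin{smallmatrix}0 & 1\\ -1 & 0\end{smallmatrix}\right)$ the standing wave is $U(t)=e^{-\omega t\mathcal{J}}(\varphi,0)$, so the phase is removed by $V(t)=e^{+\omega\mathcal{J}t}U(t)-(\varphi,0)$ (not $e^{-\omega\mathcal{J}t}U(t)-(\varphi,0)$, which would linearize around the functional $\mathcal{E}-\omega\mathcal{F}$); this sign is immaterial to the rest of the argument and is of the same cosmetic kind as the paper's own inconsistency in its displayed formula for $T_2(\omega t)(\varphi,0)$.
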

\begin{proof}
The proof of this result can be done using similar arguments of Theorem \ref{instabilityKlein-Gordon}. In fact, standing wave solution as in \eqref{NLS-1} can be written as 
$$
u(x,t)= T_2(\omega t) (\varphi,0)=
\begin{pmatrix}
\varphi(x) \cos(\omega t) \\ 
\varphi(x) \sin(\omega t)
\end{pmatrix}, 
$$
where 
$$
T_2(\theta)U=\begin{pmatrix}
\cos \theta & \sin \theta \\ 
-\sin \theta & \cos \theta
\end{pmatrix} \begin{pmatrix}
u_1 \\ 
u_2
\end{pmatrix},
$$
is the rotation symmetry and $U=(u_1,u_2) \in Z_e$.
\end{proof}

 \section*{Acknowledgments}
The authors would like to express their gratitude to F\'abio Natali for the suggestions and comments concerning this work.
This study is supported by CAPES/Brazil - Finance 001.

\end{document}